\documentclass[12pt]{article}
\usepackage{amsmath}
\usepackage{amssymb}
\usepackage{amsthm}
\usepackage{tikz,url}
\usepackage[backend=biber]{biblatex}
\usepackage[hidelinks]{hyperref}

\newtheorem{theorem}{Theorem}[section]
\newtheorem{lemma}[theorem]{Lemma}

\numberwithin{equation}{section}

\DeclareMathOperator{\dist}{dist}
\DeclareMathOperator{\sign}{sign}

\newcommand{\R}{\mathbb{R}}

\newcommand{\Z}{\mathbb{Z}}

\title{Asymptotic Spectral Bounds under Odd-Girth Constraints}
\author{Chen Wang}

\begin{document}

\maketitle

\begin{abstract}
    Odd girth and the sum of the largest and smallest adjacency eigenvalues are two common indicators of how close a graph is to being bipartite. This paper investigates the asymptotic behavior of this eigenvalue sum under odd-girth constraints. We study the associated abstract moment problem over finite real lists satisfying the vanishing odd-moment identities imposed by odd girth at least $g$ and the standard second-moment bound for graph spectra. We prove that the supremum of $(\lambda_1+\lambda_n)/n$ over these lists lies between $(1-o(1))(\log g)^2/g^3$ and $(10/7+o(1))(\log g)^2/g^3$ as $g\to\infty$ through the odd integers. The upper bound applies in particular to graphs of odd girth at least $g$.
    
    For regular graphs of odd girth $g\geq5$, we obtain a stronger bound $(\lambda_1+\lambda_n)/n \leq 12/((g-1)(g-2)(g-3))$, which is optimal up to a constant factor.
\end{abstract}

\section{Introduction}

A connected graph is bipartite if and only if its largest and smallest adjacency eigenvalues sum to zero. This characterization suggests a quantitative question: how small must this sum be in a graph containing no short odd cycles?

For a graph $G$ on $n$ vertices, write $\lambda_1(G)\geq\cdots\geq\lambda_n(G)$ for its adjacency eigenvalues. The odd girth of $G$ is the length of its shortest odd cycle, taken to be infinite when $G$ is bipartite. For an odd integer $g\geq5$, let
\[
    \gamma_g = \sup_G \frac{\lambda_1(G)+\lambda_n(G)}{|V(G)|},
\]
where the supremum ranges over all finite simple graphs of odd girth at least $g$. The odd cycle $C_g$ gives
\[
    \gamma_g \geq \frac{2-2\cos(\pi/g)}{g} = \frac{\pi^2+o(1)}{g^3}.
\]
The case $g=5$ corresponds to triangle-free graphs. Brandt \cite{brandt1998local} proved that $(\lambda_1+\lambda_n)/n\leq3-2\sqrt{2}$ for regular triangle-free graphs, and Csikv\'ari \cite{csikvari2022sum} extended this bound to all triangle-free graphs.

Related work concerns the least signless Laplacian eigenvalue, which equals $\lambda_1+\lambda_n$ for regular graphs. Balogh, Clemen, Lidick\'y, Norin, and Volec \cite{balogh2023spectrum} proved that this eigenvalue is at most $15n/94$ for every triangle-free graph on $n$ vertices. More recently, Zhou \cite{zhou2026signless} established the bound $0.0569n$ for graphs of odd girth at least seven, also without assuming regularity. Abiad, Kumar, and Pragada \cite{abiad-kumar-pragada2026} raised the corresponding question of bounding the least signless Laplacian eigenvalue for graphs of high odd girth.

For growing odd girth, Abiad, Taranchuk, and van Veluw \cite{abiad-taranchuk-vanveluw} proved that $\gamma_g=O(g^{-1})$; they also obtained the stronger bound $\gamma_7<0.0396$ for the case $g=7$. Yip \cite{yip} subsequently improved the asymptotic bound to $\gamma_g=O((\log g)^3/g^3)$, leaving a polylogarithmic gap between the upper and lower bounds.

Yip's argument also raises a question about the information needed to obtain such bounds. If $G$ has odd girth at least $g$, then its adjacency eigenvalues satisfy
\[
    \sum_{i=1}^n \lambda_i^j=0 \qquad (j=1,3,\ldots,g-2),
\]
since $G$ has no closed walks of these lengths. They also satisfy
\[
    \sum_{i=1}^n \lambda_i^2 =2|E(G)|\leq n\lambda_1.
\]
These are the only properties of graph spectra used in his proof. Accordingly, Yip defined $\gamma'_g$ as the supremum of $(\lambda_1+\lambda_n)/n$ over all finite real lists $\lambda_1\geq\cdots\geq\lambda_n$ satisfying these conditions, and established
\[
    \gamma_g\leq\gamma'_g =O\left(\frac{(\log g)^3}{g^3}\right).
\]
He asked for the asymptotic order of $\gamma'_g$.

The abstract moment problem admits a scale-invariant formulation. For a nonzero real list
$\lambda=(\lambda_1,\ldots,\lambda_n)$, ordered decreasingly, define
\[
    \Phi(\lambda) =\frac{\lambda_1(\lambda_1+\lambda_n)}{\sum_{i=1}^n\lambda_i^2}.
\]
This definition extends naturally to unsorted tuples and multisets by replacing $\lambda_1$ and $\lambda_n$ with the maximum and minimum elements, respectively. Positive scaling preserves both $\Phi$ and the vanishing odd moments. By rescaling a list so that
$\sum_i\lambda_i^2=n\lambda_1$, we obtain the equivalent formulation
\[
    \gamma'_g=\sup_\lambda \Phi(\lambda),
\]
where the supremum ranges over all nonzero finite real lists satisfying
\[
    \sum_i\lambda_i^j=0 \qquad (j=1,3,\ldots,g-2).
\]
Thus the quadratic constraint can be absorbed into the objective, leaving an optimization problem governed by vanishing odd moments.

Our first main result determines the asymptotic order of the abstract moment problem. More precisely, we prove that
\[
    (1-o(1))\frac{(\log g)^2}{g^3} \leq \gamma'_g \leq \left(\frac{10}{7}+o(1)\right)\frac{(\log g)^2}{g^3}
\]
as $g\to\infty$ through odd integers.
This improves Yip's upper bound by a factor of $\log g$ and shows that the remaining logarithmic factor is unavoidable under the moment and quadratic constraints alone.

Our second main result concerns unweighted regular graphs. For a graph $G$ with at least one edge, write $\Phi(G)$ for $\Phi$ evaluated
on its adjacency spectrum. Then 
\[
    \frac{\lambda_1(G)+\lambda_n(G)}{n}\leq \Phi(G),
\]
with equality whenever $G$ is regular. Indeed, for a $d$-regular graph, $\lambda_1(G)=d$ and $\sum_i\lambda_i(G)^2=nd$, so
\[
    \Phi(G)=\frac{d+\lambda_n(G)}{n}.
\]
We prove that every $d$-regular graph of odd girth exactly $g\geq5$ satisfies
\[
    \frac{d+\lambda_n(G)}{n} \leq \frac{12}{(g-3)(g-2)(g-1)}.
\]
Together with the example of the odd cycle, this determines the asymptotic order for regular graphs:
\[
    \gamma_g^{\mathrm{reg}}=\Theta(g^{-3}),
\]
where $\gamma_g^{\mathrm{reg}}$ denotes the supremum defining $\gamma_g$ restricted to regular graphs.

Consequently,
\[
    \frac{\gamma'_g}{\gamma_g^{\mathrm{reg}}} =\Theta((\log g)^2).
\]
The moment relaxation therefore permits values larger by a logarithmic factor than those attainable by regular graphs, even though the two objectives agree on every regular graph. Our results leave the asymptotic order of $\gamma_g$ for general graphs undetermined.

The scale-invariant formulation also extends naturally to \emph{edge-weighted graphs}. Let $G$ be a finite simple graph with at least one edge and positive edge weights $w_e$, and let $A$ be its weighted adjacency matrix. Then
\[
    \Phi(A) =\frac{\lambda_1(A)(\lambda_1(A)+\lambda_n(A))}{\operatorname{tr}(A^2)}=\frac{\lambda_1(A)(\lambda_1(A)+\lambda_n(A))}{2\sum_{e\in E(G)}w_e^2}.
\]
This quantity is unchanged when all edge weights are multiplied by a common positive constant. If the underlying graph has odd girth at least $g$, its weighted adjacency spectrum still satisfies
\[
    \operatorname{tr}(A^j)=0 \qquad (j=1,3,\ldots,g-2).
\]
Consequently, $\Phi(A)\leq \gamma'_g$, so every upper bound for the abstract moment problem applies to arbitrarily weighted graphs of the same odd girth.

This paper is organized as follows. In Section~\ref{sec:preliminaries}, we introduce the necessary background and notation. Sections~\ref{sec:moment_problem} and \ref{sec:lower_bounds} analyze the abstract moment problem and establish the asymptotic order of $\gamma'_g$. Section~\ref{sec:regular_graphs} focuses on unweighted regular graphs and proves the corresponding bounds for $\gamma_g^{\mathrm{reg}}$. Finally, Section~\ref{sec:conclusion} discusses the implications of our results and outlines directions for future research.

\section{Preliminaries}
\label{sec:preliminaries}

Throughout the paper, $\log$ denotes the natural logarithm unless a base is indicated.

In this section, we collect the polynomial identities and estimates used in the upper- and lower-bound arguments for the abstract moment problem. We begin with Lagrange interpolation, followed by the properties of Chebyshev and Legendre polynomials needed below. For the standard facts about orthogonal polynomials, we refer to \cite{Szego1975}.

\subsection{Lagrange interpolation}
We record the following Lagrange interpolation identities, which will be used throughout the upper-bound argument.

\begin{lemma}[Lagrange interpolation]\label{lem:lagrange_interpolation}
    Let $\xi_1,\ldots,\xi_n$ be distinct real numbers, where $n\geq1$, and put
    \[
        V(z):=\prod_{j=1}^n(z-\xi_j),\qquad L_j(z):=\frac{V(z)}{(z-\xi_j)V'(\xi_j)}.
    \]
    Every polynomial $f$ of degree at most $n-1$ satisfies
    \begin{equation}\label{eq:lagrange_interpolation}
        f(z)=\sum_{j=1}^n f(\xi_j)L_j(z).
    \end{equation}
    In particular, if $\deg f\leq n-2$, then
    \begin{equation}\label{eq:lagrange_cancellation}
        \sum_{j=1}^n\frac{f(\xi_j)}{V'(\xi_j)}=0.
    \end{equation}
    Moreover, the derivatives of the interpolation polynomials satisfy
    \begin{equation}\label{eq:lagrange_derivatives}
        L_j'(\xi_i)=
        \begin{cases}
            \displaystyle\sum_{\ell\ne i}\frac{1}{\xi_i-\xi_\ell}, &j=i,\\[6pt]
            \displaystyle\frac{V'(\xi_i)}{(\xi_i-\xi_j)V'(\xi_j)}, &j\ne i,
        \end{cases}
        \qquad i,j=1,\ldots,n.
    \end{equation}
\end{lemma}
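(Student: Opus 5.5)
We prove the three assertions in order, all from the factorization $V(z)=(z-\xi_j)V_j(z)$, where $V_j(z):=\prod_{\ell\ne j}(z-\xi_\ell)$. Each $V_j$ is a polynomial of degree $n-1$ satisfying $V_j(\xi_j)=V'(\xi_j)$. The latter follows by differentiating the product and evaluating at $\xi_j$, where every term except one vanishes. Hence $L_j=V_j/V'(\xi_j)$ is a polynomial of degree $n-1$, and distinctness of the nodes gives $V'(\xi_j)\neq0$. Evaluating at the nodes gives $L_j(\xi_i)=\delta_{ij}$.

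For \eqref{eq:lagrange_interpolation}, consider $f-\sum_j f(\xi_j)L_j$. It has degree at most $n-1$ and vanishes at the $n$ distinct points $\xi_i$, so it is identically zero.

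For \eqref{eq:lagrange_cancellation}, compare coefficients of $z^{n-1}$ on both sides of \eqref{eq:lagrange_interpolation}. Each $V_j$ is monic of degree $n-1$, so the coefficient of $z^{n-1}$ on the right is $\sum_j f(\xi_j)/V'(\xi_j)$. On the left it is $0$, since $\deg f\le n-2$. When $n=1$, the hypothesis $\deg f\le -1$ forces $f=0$, and the identity is trivial.

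For \eqref{eq:lagrange_derivatives}, we have $L_j'=V_j'/V'(\xi_j)$.

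\emph{Case $j\ne i$.} Write $V_j(z)=(z-\xi_i)W(z)$ with $W=\prod_{\ell\ne i,j}(z-\xi_\ell)$. Then $V_j'(\xi_i)=W(\xi_i)$. Since $V'(\xi_i)=\prod_{\ell\ne i}(\xi_i-\xi_\ell)=(\xi_i-\xi_j)W(\xi_i)$, this gives $L_j'(\xi_i)=V'(\xi_i)/\bigl((\xi_i-\xi_j)V'(\xi_j)\bigr)$.

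\emph{Case $j=i$.} Use the logarithmic derivative $V_i'(z)/V_i(z)=\sum_{\ell\ne i}1/(z-\xi_\ell)$. Evaluating at $\xi_i$, where $V_i(\xi_i)=V'(\xi_i)\ne0$, gives $L_i'(\xi_i)=V_i'(\xi_i)/V'(\xi_i)=\sum_{\ell\ne i}1/(\xi_i-\xi_\ell)$.

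None of these steps is a real obstacle; the only point needing care is checking that $V'(\xi_j)=V_j(\xi_j)\neq0$, which uses distinctness of the nodes.
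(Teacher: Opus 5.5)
Your proof is correct and follows the same route as the paper. You use uniqueness of a degree-$\le n-1$ polynomial vanishing at $n$ distinct nodes, then compare the coefficients of $z^{n-1}$, then differentiate the product form of $L_j$ at the nodes; your check that $V'(\xi_j)=V_j(\xi_j)\neq0$ and the logarithmic-derivative computation for $j=i$ just spell out details the paper leaves implicit.
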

\begin{proof}
    Both sides of \eqref{eq:lagrange_interpolation} have degree at most $n-1$ and agree at the $n$ distinct nodes $\xi_1,\ldots,\xi_n$, so they are equal. Comparing the coefficients of $z^{n-1}$ gives \eqref{eq:lagrange_cancellation} when $\deg f\leq n-2$. Finally, \eqref{eq:lagrange_derivatives} follows by differentiating
    \[
        L_j(z)=\prod_{\ell\ne j}\frac{z-\xi_\ell}{\xi_j-\xi_\ell}
    \]
    and evaluating at $z=\xi_i$.
\end{proof}

\subsection{Chebyshev polynomials}
We record the properties of Chebyshev polynomials needed in the upper- and lower-bound arguments.

\begin{lemma}[Chebyshev polynomials]\label{lem:chebyshev_polynomials}
    The Chebyshev polynomials of the first kind are defined by
    \[
        T_0(x)=1,\qquad T_1(x)=x,\qquad
        T_{n+1}(x)=2xT_n(x)-T_{n-1}(x)\quad(n\geq1).
    \]
    For every integer $n\geq0$, the polynomial $T_n$ has degree $n$ and satisfies $T_n(-x)=(-1)^nT_n(x)$. Its leading coefficient is $2^{n-1}$ when $n\geq1$. Moreover,
    \begin{align}
        T_n(\cos\theta)&=\cos(n\theta),
            &&\theta\in\R,\label{eq:chebyshev_cosine}\\
        T_n(\cosh u)&=\cosh(nu),
            &&u\geq0.\label{eq:chebyshev_hyperbolic}
    \end{align}
    In particular,
    \begin{equation}\label{eq:chebyshev_basic_properties}
        |T_n(x)|\leq1\quad(|x|\leq1),\qquad
        T_n(1)=1,\qquad T_n'(1)=n^2,
    \end{equation}
    and $T_n$ is nondecreasing on $[1,\infty)$, with $T_n(x)\geq1$ there. For $0<\theta<\pi$, we have
    \begin{equation}\label{eq:chebyshev_derivative}
        T_n'(\cos\theta)=\frac{n\sin(n\theta)}{\sin\theta}.
    \end{equation}
    When $n\geq1$, the zeros of $T_n$ are simple and are given by
    \begin{equation}\label{eq:chebyshev_zeros}
        \cos\left(\frac{(2r-1)\pi}{2n}\right),
        \qquad r=1,\ldots,n.
    \end{equation}
\end{lemma}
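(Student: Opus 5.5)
The plan is to derive everything from the recurrence by induction on $n$, together with the standard trigonometric and hyperbolic addition formulas. First I establish the algebraic facts. Degree, leading coefficient and parity come from a single strong induction. For $n=0,1$ they hold directly. If $T_n$ has degree $n$ and leading coefficient $2^{n-1}$, and $T_{n-1}$ has degree $n-1$, then $2xT_n(x)-T_{n-1}(x)$ has degree $n+1$ with leading coefficient $2\cdot 2^{n-1}=2^n$. The case $n=1$ has to be handled on its own: $T_2=2x^2-1$ has leading coefficient $2$, which agrees with the formula. Parity also follows from the recurrence: if $T_n(-x)=(-1)^nT_n(x)$ and $T_{n-1}(-x)=(-1)^{n-1}T_{n-1}(x)$, then
\[
T_{n+1}(-x)=-2x(-1)^nT_n(x)-(-1)^{n-1}T_{n-1}(x)=(-1)^{n+1}T_{n+1}(x).
\]

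Next I prove \eqref{eq:chebyshev_cosine} and \eqref{eq:chebyshev_hyperbolic} by the same induction. The sequences $\cos(n\theta)$ and $\cosh(nu)$ satisfy the same three-term recurrence, because
\[
\cos((n+1)\theta)+\cos((n-1)\theta)=2\cos\theta\cos(n\theta),
\]
and the identical identity holds with $\cosh$ in place of $\cos$. They also have the same initial values as $T_0$ and $T_1$ evaluated at $x=\cos\theta$ or $x=\cosh u$.

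The consequences then follow one by one.
\begin{itemize}
\item For $|x|\le1$, write $x=\cos\theta$; this gives $|T_n(x)|\le1$, and $\theta=0$ gives $T_n(1)=1$.
\item On $[1,\infty)$, write $x=\cosh u$ with $u\ge0$. The map $u\mapsto\cosh u$ is an increasing bijection $[0,\infty)\to[1,\infty)$, and $u\mapsto\cosh(nu)$ is nondecreasing and at least $1$. Hence $T_n$ is nondecreasing on $[1,\infty)$ and $T_n(x)\ge1$ there.
\item Differentiating $T_n(\cos\theta)=\cos(n\theta)$ in $\theta$ gives $-\sin\theta\,T_n'(\cos\theta)=-n\sin(n\theta)$. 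For $0<\theta<\pi$ we have $\sin\theta\neq0$, which yields \eqref{eq:chebyshev_derivative}.
\item Since $T_n'$ is a polynomial and hence continuous, $T_n'(1)$ is the limit of $n\sin(n\theta)/\sin\theta$ as $\theta\to0^+$. This limit equals $n\cdot n=n^2$.
\end{itemize}

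Finally, for the zeros with $n\ge1$, set $\theta_r=(2r-1)\pi/(2n)$ for $r=1,\dots,n$. These angles lie strictly inside $(0,\pi)$ and are strictly increasing, so the points $\cos\theta_r$ are $n$ distinct numbers. Each satisfies $\cos(n\theta_r)=\cos((2r-1)\pi/2)=0$. A polynomial of degree exactly $n$ has at most $n$ roots counted with multiplicity, so these are all the zeros of $T_n$ and each is simple.

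The only point requiring care is the $T_n'(1)$ claim, since \eqref{eq:chebyshev_derivative} is stated only for $0<\theta<\pi$. The continuity limit above handles it. Alternatively, one could differentiate the hyperbolic identity and let $u\to0^+$.
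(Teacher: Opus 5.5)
Your proof is correct and complete. The paper gives no proof of this lemma; it records these facts as standard and defers to the cited reference on orthogonal polynomials. Your argument is the standard textbook one: induct on the recurrence, then match it against $\cos(n\theta)$ and $\cosh(nu)$.

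Two cosmetic remarks. The case $n=1$ needs no separate treatment, since $T_1$ already has leading coefficient $2^0$ and the general inductive step applies. The distinctness of the points $\cos\theta_r$ uses that cosine is strictly decreasing on $(0,\pi)$, which you should state explicitly.
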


\subsection{Legendre polynomials and reproducing kernels}
\label{sec:legendre_preliminaries}
We record the properties of Legendre polynomials and their reproducing kernels needed for the lower-bound argument.

\begin{itemize}
    \item The Legendre polynomials $\{P_k\}_{k\geq0}$, normalized such that $P_k(1)=1$, are orthogonal with respect to the Lebesgue measure on $[-1,1]$:
    \begin{equation}\label{eq:legendre_orthogonality}
        \int_{-1}^1 P_k(x) P_\ell(x) \, dx = \frac{2}{2k+1} \delta_{k\ell}.
    \end{equation}
    \item The Legendre polynomials satisfy the upper bounds
    \begin{align}
        |P_k(x)| &\leq 1, \quad x\in[-1,1], \label{eq:legendre_bound_1}\\
        1\leq P_k(\cosh \alpha) &\leq e^{k\alpha}, \quad \alpha\geq 0, \label{eq:legendre_bound_2}
    \end{align}
    the parity relation
    \begin{equation}\label{eq:legendre_parity}
        P_k(-x) = (-1)^k P_k(x),
    \end{equation}
    and the integral relation
    \begin{equation}\label{eq:legendre_integral}
        \int P_k(x)\,dx = \frac{P_{k+1}(x)-P_{k-1}(x)}{2k+1}+C, \quad k\geq 1.
    \end{equation}
    \item The reproducing kernel for the space of polynomials of degree at most $D$ is given by
    \begin{equation}\label{eq:legendre_kernel}
        K_D(x,y) = \sum_{k=0}^D \frac{2k+1}{2} P_k(x) P_k(y),
    \end{equation}
    which satisfies the reproducing property
    \begin{equation}\label{eq:legendre_reproducing}
        \int_{-1}^1 K_D(x,y) p(y) \, dy = p(x)
    \end{equation}
    for all polynomials $p$ of degree at most $D$ and all $x\in\R$.
\end{itemize}
We also make the convention that $P_{-1}(x) \equiv 1$ so that the integral relation \eqref{eq:legendre_integral} holds for $k=0$ as well.

\section{Upper Bounds for the Abstract Moment Problem}
\label{sec:moment_problem}
In this section, we prove the following upper bound for the abstract moment problem.

\begin{theorem}\label{thm:moment_upper_bound}
    As $g\to\infty$ through odd integers,
    \[
        \gamma'_g \leq (10/7+o(1))\frac{(\log g)^2}{g^3}.
    \]
\end{theorem}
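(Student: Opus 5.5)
Normalize a list $\lambda$ with $\lambda_1=1$ and write $\lambda_n=-(1-\varepsilon)$, so that $\Phi(\lambda)=\varepsilon/\sum_i\lambda_i^2$. Write $k=(g-1)/2$; the vanishing odd moments are $\sum_i\lambda_i^{2j-1}=0$ for $j=1,\ldots,k$. The plan is to pick an odd polynomial $p$ of degree at most $g-2$ and use
\[
\sum_i p(\lambda_i)=0 .
\]
We then split the sum into the contribution of the extreme eigenvalues and that of the bulk. For the extreme terms, the term $\lambda_1=1$ contributes $p(1)$, and the term $\lambda_n$ contributes $p(-1+\varepsilon)=-p(1-\varepsilon)\approx -p(1)+\varepsilon p'(1)$. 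For the bulk, all other $\lambda_i\in[-1+\varepsilon,1]$, and we want $|p(x)|\le C x^2$ there, or more precisely $-p(x)\le Cx^2$ with a one-sided sign. This yields
\[
p(1)-p(1-\varepsilon)\le C\sum_i\lambda_i^2 ,
\]
which is roughly $\varepsilon\, p'(1)\lesssim C\sum\lambda_i^2$, i.e.\ $\Phi\lesssim C/p'(1)$ after accounting for the actual difference.

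The design problem is therefore extremal. We want an odd polynomial $p$ of degree $\le g-2$ with $p(1)-p(1-\varepsilon)$ large compared with $\sup_{|x|\le 1} |p(x)|/x^2$ (or a one-sided version). Taking $p(x)=x\,q(x^2)$, the constraint becomes $|q(y)|\le C/\sqrt{y}$ on $[0,1]$ for the bulk. The contribution near $1$ is governed by the derivative growth of $q$, where Chebyshev-type growth gives a factor $g^2$.

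Our first candidate is built from $T_{g-2}$ and its relatives. A plain Chebyshev polynomial $T_m(x)$ satisfies $|T_m|\le 1$, but near $x=0$ the ratio $T_m(x)/x^2$ blows up like $1/x$. Dividing by $x^2$ suggests a kernel-weighted construction instead: take
\[
p(x)=\sum_{\text{odd } r<g} c_r T_r(x),
\]
with coefficients chosen like a Fejér- or Jackson-type smoothing of a $1/x$ behaviour, so that $|p(x)|\lesssim \min(1,\ g|x|)$ in the bulk, $p'$ near $1$ is of order $g^2$, and $p(x)/x^2\lesssim g\min(1/(g|x|),\ldots)$. The logarithm arises because $\sum_i \lambda_i^2\cdot(\text{weight})$ requires controlling $p(x)/x^2$ across dyadic scales $|x|\sim 2^{-j}$ from $1/g$ to $1$. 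The natural choice is $p(x)\approx x\sum_{r}$ (truncated Chebyshev expansion of $\operatorname{sign}$ or of $\log$-type functions), and each scale costs a factor $\log g$.

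The relevant scale is then $\varepsilon$ of order $(\log g)/g^2$ for the extremal configuration, which gives $\Phi\lesssim (\log g)^2/g^3$. In the heuristic balance, $\sum\lambda_i^2\gtrsim \varepsilon\cdot g^3/(\log g)^2$. Pinning down the constant $10/7$ requires an optimized interpolation. The Lagrange interpolation lemma, Lemma~\ref{lem:lagrange_interpolation}, with nodes at Chebyshev zeros (Lemma~\ref{lem:chebyshev_polynomials}), will be used to build $p$ with prescribed values and to evaluate the derivative sums exactly via \eqref{eq:lagrange_derivatives}. Concretely, we would interpolate an odd function that equals $x^2\sign(x)$-like bounds at the Chebyshev nodes of degree $g-2$ and is tangent appropriately at $\pm1$, then estimate $p'(1)$ with harmonic sums $\sum 1/(\xi_i-\xi_\ell)$, which produce $\log g$.

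The main obstacle is the simultaneous control of $p$ in two regimes: the bulk inequality $-p(x)\le Cx^2$ near $x=0$, where the ratio is delicate at scale $1/g$, and the regime near $x=-1$. Eigenvalues in $[-1+\varepsilon,-1+O(\log g/g^2)]$ interact with the gap $\varepsilon$, and a case split on where the mass of the negative eigenvalues sits is likely needed. In that case split we would treat the eigenvalues near $-1$ via the second-order Taylor bound $T_m'(1)=m^2$ together with monotonicity of $T_m$ on $[1,\infty)$, and the rest via the interpolation estimate. Optimizing the free parameter (the scale $\varepsilon$ versus $\log g/g^2$) across the two cases is what should produce the constant $10/7$.
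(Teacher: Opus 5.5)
\textbf{There is a genuine gap: the plan is a linear certificate, and no linear certificate can reach $(\log g)^2/g^3$.} Every step you propose is linear in the multiplicities. This covers the identity $\sum_i p(\lambda_i)=0$ for one odd $p$, the termwise bulk bound $-p(x)\le Cx^2$, and the position-based case split with Taylor and Chebyshev estimates. All of these remain valid if the multiplicities are arbitrary positive reals, provided $\lambda_1$ and $\lambda_n$ each carry one full copy. That relaxation has value of order $g^{-2}$, so no choice of $p$ (Fej\'er, Jackson, interpolation at Chebyshev nodes, or anything else) can give $(\log g)^2/g^3$.

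Here is an explicit example. Use the normalization minimum $-1$, maximum $a>1$. Let $k=(g-3)/2$, $m=2k-1$, and $s_i=\cos(i\pi/m)$ for $0\le i\le k-1$, so $T_m(s_i)=(-1)^i$. Put mass $1$ at $a$ and mass $\mu_i$ at $(-1)^{i+1}s_i$.
\begin{itemize}
\item The moment conditions for $x^3,\dots,x^{2k+1}$ say that $w_i:=(-1)^i\mu_is_i^3$ satisfy $\sum_i w_if(s_i^2)=a^3f(a^2)$ for all $f$ with $\deg f\le k-1$.
\item Hence $w_i=a^3L_i(a^2)$, where $L_i$ is the Lagrange basis at the nodes $s_i^2$. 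This has sign $(-1)^i$, so every $\mu_i>0$, and $\mu_0=a^3L_0(a^2)>1$. Thus $-1$ carries a full copy.
\item Testing against $x^2T_m(x)$ gives $Q=a^2(1+T_m(a))$.
\item Taking $a=\cosh(2/m)$ gives $\Phi=(a-1)/(a(1+\cosh 2))\approx0.42\,g^{-2}$.
\end{itemize}
The first moment is restored by the paper's adjoining trick, which works verbatim for real masses. Equivalently, the best certificate of your type is essentially $p(x)=x^2T_{2k-1}(x)$, by Chebyshev's extremal property. That is exactly the paper's final step $r_0^2T_{2k-1}(r_0)<Q$. On its own it gives only $c\le(\log 2W)^2/(2(2k-1)^2W)$ with $W=Q/r_0^2$, which is $O(g^{-2})$ when $W=O(1)$.

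\textbf{The missing ingredient is a lower bound $Q\gtrsim g$.} It must come from integrality of the multiplicities at essentially all of the roughly $g/2$ residual support locations. The paper proves it only for maximizers, in these steps:
\begin{itemize}
\item It passes to a maximizing tuple of fixed size and derives first- and second-order optimality conditions (Lemma~\ref{lem:upper_local_optimality}).
\item It proves minimal support (Lemma~\ref{lem:upper_minimal_support}).
\item It uses the \emph{integer-valued} tail-counting function $C(t)$ to force alternating signs and multiplicity at least two at every residual location (Lemma~\ref{lem:upper_multiplicity_floor}). This is the step that fails for real masses.
\item The splitting inequalities $R'(r_j)\ge0$, the polynomial comparison lemma with the envelope polynomials $S_\delta$, and the weighted averaging then give $Q\ge(4/C_*-o(1))k$.
\end{itemize}
Only after this does the Chebyshev step give $\operatorname{arcosh} r_0<\log(2W)/(2k-1)$ with $W\gtrsim k$.

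So the logarithm is $\log W$ with $W\asymp g$, not a sum over dyadic scales. The extremal scale is $r_0-1\asymp(\log g)^2/g^2$, not $(\log g)/g^2$. Likewise, $10/7$ does not come from optimizing $\varepsilon$. It is the numerical bound $C_*/4<10/7$, where $C_*=3+\sum_{\delta\ge3}(1+\lambda_\delta)^{-1}\approx5.70$.
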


We first observe that dropping the first-moment constraint does not change the supremum. Clearly, dropping this constraint enlarges the feasible set and hence cannot decrease the supremum. Conversely, let $X$ be a feasible multiset for the problem without the first-moment constraint, with positive objective, normalized to have minimum $-1$ and
maximum $a>1$. For any integer $m>a$, form a new multiset $Y$ by adjoining to $X$ exactly $m$ copies of the multiset $-X/m$. For every odd positive integer $j$, we have
\[
    \sum_{u\in Y}u^j=\left(1-m^{1-j}\right)\sum_{u\in X}u^j.
\]
Thus $Y$ has vanishing first moment and retains all the required higher vanishing odd moments. Its minimum and maximum remain $-1$ and $a$, while
\[
    \sum_{u\in Y}u^2=\left(1+\frac1m\right)\sum_{u\in X}u^2.
\]
Consequently,
\[
    \Phi(Y)=\frac{m}{m+1}\Phi(X)\longrightarrow\Phi(X)\qquad\text{ as }m\to\infty.
\]
This proves that the two suprema coincide. For the remainder of this section, we work with the abstract moment problem without the first-moment constraint.

Fix a feasible multiset of size $N$ with positive objective. By positive scaling, we may first impose the normalization
\[
    \sum_{i=1}^N x_i^2=1.
\]
The retained moment constraints define a closed subset of the unit sphere in $\R^N$, so the normalized feasible set is compact. Since $\Phi$ is continuous on this set, it attains a maximum. This maximum is at least the objective value of the original multiset and is therefore positive.

Let $x$ be such a maximizer. The vanishing third moment and the normalization $\sum_i x_i^2=1$ imply that $x$ has both positive and negative entries. We may therefore rescale $x$ to have minimum $-1$; its maximum is then $a>1$, since its objective is positive. The rescaled tuple remains a global maximizer among feasible $N$-tuples with minimum $-1$. Indeed, any competing feasible $N$-tuple with minimum $-1$ can be rescaled back onto the unit sphere without changing its objective, and therefore cannot have a larger objective value. We use this minimum normalization in the subsequent local arguments.

Every positive-objective feasible multiset is therefore dominated by a maximizing multiset of the same size. We will bound these maxima uniformly in $N$.

In this section, we use \emph{entry} for an individual coordinate, counting repetitions, and \emph{support location} for a distinct value occurring in the tuple. The multiplicity of a support location is the number of entries equal to it; a value is \emph{simple} if its multiplicity is one. The \emph{nonzero support} is the set of nonzero support locations.

Under the normalization above, entries equal to $a$ are called \emph{top entries}, and entries equal to $-1$ are called \emph{minimum entries}. An entry is \emph{residual} if it is strictly less than $a$, and \emph{interior} if it lies in $(-1,a)$. Thus zero entries, when present, are both residual and interior. The \emph{residual support} consists of the nonzero support locations other than $a$; it includes $-1$. Its locations in $(-1,a)$ form the \emph{interior residual support}. Once the largest entry is proved simple, we refer to it as the top entry.

The proof of the upper bound is divided into several steps. First, we derive local optimality conditions for a maximizer, and then use these conditions to conclude any such maximizer must have exactly $(g-1)/2$ distinct nonzero values. A sign-change argument for a signed tail-counting function then shows that every residual support location has multiplicity at least two, allowing us to impose splitting stability inequalities. These constraints ultimately lead to an asymptotic lower bound for the second moment $\sum_i x_i^2$. Finally, an estimate using Chebyshev polynomials controls the largest entry in terms of the second moment and yields the upper bound for the objective $\Phi$.

\subsection{Local optimality conditions}

Put $k:=(g-3)/2$, so that the retained moment conditions are
\[
    M_{2j+3}(x):=\sum_{i=1}^N x_i^{2j+3}=0, \qquad j=0,\ldots,k-1.
\]
We work with individual, unordered coordinates. In particular, equal entries are allowed to separate; their multiplicities are not fixed during local variations. The size $N$ remains fixed throughout the argument.

We first justify the local optimality conditions, including their use at the boundary where one or more entries equal $-1$.

\begin{lemma}\label{lem:upper_local_optimality}
    Let $x$ be a local maximizer of $\Phi$ among feasible $N$-tuples with minimum $-1$ and maximum at least $1$, and suppose that $\Phi(x)>0$. Then its largest entry is simple. Relabel it as $x_1=a>1$, and put
    \[
        \begin{gathered}
            Q:=\sum_{i=1}^N x_i^2,\qquad c:=\Phi(x)=\frac{a(a-1)}{Q},\\
            I_-:=\{i:x_i=-1\},\qquad b:=|I_- |.
        \end{gathered}
    \]
    There exists a polynomial $p$ in the span of $z^3,z^5,\ldots,z^{g-2}$ such that, for
    \[
        R(z):=2cz-p'(z),
    \]
    we have
    \begin{equation}\label{eq:upper_stationarity}
        R(a)=2a-1,\qquad
        R(x_i)=0\quad(-1<x_i<a),\qquad
        R(-1)=\frac{a}{b}.
    \end{equation}
    Moreover, every vector $v\in\R^N$ satisfying
    \begin{equation}\label{eq:upper_tangent_conditions}
        v_i=0\quad(i\in I_-),\qquad \sum_{i=1}^N(2\ell+1)x_i^{2\ell}v_i=0 \quad(\ell=1,\ldots,k)
    \end{equation}
    satisfies
    \begin{equation}\label{eq:upper_second_variation}
        \sum_{i=1}^N R'(x_i)v_i^2-2v_1^2\geq0.
    \end{equation}
\end{lemma}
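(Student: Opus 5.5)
The plan is to read the lemma as first- and second-order Lagrange (KKT) conditions for maximizing $F(x):=\max_i x_i\,(\max_i x_i+\min_i x_i)$ subject to $Q(x)=\mathrm{const}$ and $M_{2j+3}(x)=0$. By scale invariance, maximizing $\Phi$ with the minimum fixed at $-1$ is equivalent to this. The first step handles the nonsmooth part, namely the max and the min. I will show the largest entry is simple by contradiction. Suppose $r\ge2$ entries equal $a$. Push one of them down by a small $t>0$. This leaves $\max=a$ and $\min=-1$ unchanged and lowers $Q$ by about $2at$, so $\Phi$ strictly increases to first order. The moment equations must then be restored by an $O(t)$ correction on the interior entries, or on another top entry if necessary, obtained from the implicit function theorem. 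The correction changes $Q$ only by $O(t)$ terms that I must check do not cancel the gain. The cleanest route is to combine this with the multiplier identity of the next step: a nonzero multiplier would give incompatible one-sided derivative conditions at two equal top entries.

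For the first-order conditions, I restrict to variations in which all $b$ minimum entries move together, so that $\min=-1+s$ is a smooth function along the variation; the constraint "minimum $=-1$" is then dropped by scale invariance. Along such variations $\Phi$ is smooth in $(x_1,\{x_i\}_{\text{interior}},s)$. Differentiating gives $Q\,\partial\Phi/\partial x_1=(2a-1)-2ca$, $Q\,\partial\Phi/\partial x_i=-2cx_i$ for interior $i$, and, for each minimum entry in the tied direction, $Q\,\partial\Phi/\partial x_i=a/b+2c$. The Lagrange rule then provides multipliers $\mu_j$ with $Q\nabla\Phi=\sum_j\mu_j\nabla M_{2j+3}$ on these coordinates. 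Set $p(z):=-\sum_j\mu_j z^{2j+3}$, up to the sign convention. Then $R=2cz-p'(z)$ satisfies $R(a)=2a-1$, $R(x_i)=0$ at interior entries, and $R(-1)=a/b$ after summing the $b$ tied coordinates. This is exactly \eqref{eq:upper_stationarity}.

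The second-order inequality \eqref{eq:upper_second_variation} comes from the standard second-order necessary condition: the Hessian of the Lagrangian $Q\Phi-\sum\mu_jM_{2j+3}$ is negative semidefinite on the tangent space \eqref{eq:upper_tangent_conditions}. Freezing the minimum entries ($v_i=0$ for $i\in I_-$) keeps the minimum equal to $-1$, so no nonsmoothness enters. The Hessian of $a(a-1)-cQ+p$-type terms is computed on that space. Using $\sum_i x_iv_i$-type cross terms and the fact that first-order terms vanish on the tangent space, the Hessian reduces to $\operatorname{diag}(2-2c+p''(x_1),\,-2c+p''(x_i))$. The sign is flipped relative to $R'$, and the extra $2$ at the top coordinate produces the $-2v_1^2$ term.

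The main obstacle is the constraint qualification. The multipliers exist only if the gradients $\bigl((2\ell+1)x_i^{2\ell}\bigr)_i$, restricted to the free coordinates, are linearly independent. This holds when the free entries take at least $k$ distinct nonzero values of $x^2$ (Vandermonde), but not otherwise. My plan is to use the Fritz John form first. If the objective multiplier vanished, there would be a nonzero even polynomial $z^2q(z^2)$ of degree at most $g-3$ vanishing at every free nonzero value, which forces fewer than $k$ distinct squares. In that degenerate case I would build an explicit improving feasible curve, contradicting local maximality. One way is to separate a repeated entry into two nearby values, which is permitted since multiplicities are not fixed. Another is to use the surplus freedom directly, since fewer active constraints then bind. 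For the second-order statement I would likewise need the tangent space to equal the linearization, which again follows from full rank once degeneracy is excluded.
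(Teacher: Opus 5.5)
Your overall plan matches the paper's: Lagrange conditions on the face where the minimum entries are frozen at $-1$, using the function $z_1(z_1-1)-c\sum_i z_i^2+\sum_i p(z_i)$, whose Hessian is $\operatorname{diag}(2-R'(a),-R'(x_i))$. Your derivation of $R(-1)=a/b$ from stationarity along the direction that moves all $b$ minimum entries together is a valid substitute for the paper's route. The paper instead uses the identity $\sum_i x_ip'(x_i)=0$ to get $2cQ=a(2a-1)-bR(-1)$.

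There is, however, a genuine gap exactly where you locate the main obstacle. Three steps require the moment Jacobian on the free coordinates to have full rank $k$, and you do not prove this. They are the existence of multipliers, the realization of every admissible tangent vector by a feasible curve (needed for the second-variation inequality), and the restoration of the moments in your top-simplicity argument. Your fallback for the degenerate case is not an argument. Without full rank there is no implicit function theorem to manufacture feasible curves, so constructing an explicit improving feasible curve is precisely what you cannot do there. The phrase ``fewer active constraints then bind'' is also not meaningful, since all $k$ equality constraints are always active.

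The missing idea is that the degenerate case never occurs at any feasible tuple with maximum $a>1$. Suppose the entries strictly below the maximum had only $s<k$ distinct nonzero absolute values $u_1,\ldots,u_s$. Then $f(z)=z^3\prod_{j=1}^s(z^2-u_j^2)$ has degree at most $g-2$ and lies in the span of the constrained odd monomials. It vanishes at every non-top entry (zeros included). It is strictly positive at $a$, because every non-top entry lies in $[-1,a)$ and so has $|x_i|<a$. This contradicts $\sum_i f(x_i)=0$. Now discard the magnitude $1$: at least $k-1$ of the remaining magnitudes have representatives in $(-1,a)$. Together with one top entry they give $k$ free coordinates with distinct nonzero squares. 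Dividing rows by $2\ell+1$ and columns by $x_i^2$ turns the Jacobian on these coordinates into an invertible Vandermonde matrix. Your Fritz John observation was one step away from this.

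Separately, your first top-simplicity argument does not work as stated. Pushing a single top entry down violates the linearized moment constraints. You then have no control over whether the $O(t)$ correction to $Q$ (or to the maximum, if it uses another top entry) cancels the gain. The paper instead gives two tied top entries velocities $+1$ and $-1$. This vector is tangent to the constraints, leaves $Q$ unchanged to first order, and raises the maximum, so $\Phi$ increases at rate $(2a-1)/Q$. Your multiplier variant also works: on the smooth branch where one of the two equal coordinates is the maximum, they have identical constraint gradients but objective partials differing by $2a-1\neq0$. But it, too, works only once regularity is established.
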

\begin{proof}
    We first show that the moment equations remain regular when every minimum entry is held fixed. In fact, at any feasible tuple with maximum $a>1$, the entries strictly below the maximum have at least $k$ distinct nonzero absolute values. Otherwise, writing those absolute values as $u_1,\ldots,u_s$ with $s<k$, the polynomial
    \[
        f(z):=z^3\prod_{j=1}^s(z^2-u_j^2)
    \]
    belongs to the span of the constrained odd monomials. It vanishes at every entry strictly below the maximum, including every zero, and is strictly positive at every largest entry. This contradicts $\sum_i f(x_i)=0$.

    After removing the magnitude $1$, at least $k-1$ of these magnitudes have representatives strictly between $-1$ and $a$. Together with one largest entry, they supply $k$ free coordinates with distinct
    nonzero squares. On these coordinates, the moment Jacobian has entries
    \[
        (2\ell+1)x_i^{2\ell},\qquad \ell=1,\ldots,k.
    \]
    Dividing its rows by $2\ell+1$ and its columns by $x_i^2$ gives a Vandermonde matrix in the distinct squares. By Lemma~\ref{lem:lagrange_interpolation}, evaluation at these $k$ squares uniquely determines a polynomial of degree at most $k-1$, so this matrix is invertible.

    The implicit function theorem now shows that the moment equations, with all coordinates in $I_-$ fixed at $-1$, define a smooth manifold near $x$. Every vector satisfying \eqref{eq:upper_tangent_conditions} is the initial velocity of a smooth curve on this manifold. All other coordinates remain strictly greater than $-1$ in a sufficiently small neighborhood, and the maximum remains greater than $1$. Hence these curves are feasible for the original normalized problem. In particular, the required curves exist for both signs of their parameter.

    Suppose next that two largest entries are equal to $a$. Give them velocities $1$ and $-1$, and give every other entry velocity zero. This vector satisfies \eqref{eq:upper_tangent_conditions},
    so it extends to an exactly feasible curve $x(\varepsilon)=x+\varepsilon v+O(\varepsilon^2)$ with all
    minimum entries fixed. Along this curve the quadratic sum is $Q+O(\varepsilon^2)$, while for $\varepsilon>0$ the maximum is $a+\varepsilon+O(\varepsilon^2)$. Consequently,
    \[
        \Phi(x(\varepsilon))=\Phi(x)+\frac{2a-1}{Q}\varepsilon+O(\varepsilon^2),
    \]
    contradicting local maximality. Thus the largest entry is simple. After relabeling it as $x_1$, the maximum is represented by this same coordinate throughout a sufficiently small neighborhood,
    and the objective is smooth there.

    With $c=\Phi(x)$ fixed, the function
    \[
        z_1(z_1-1)-c\sum_i z_i^2=\left(\sum_i z_i^2\right)(\Phi(z)-c)
    \]
    has a local maximum on the moment manifold. By the regularity proved above, Lagrange multipliers give a polynomial $p$ in the stated span for which $R(a)=2a-1$ and $R(x_i)=0$ at all interior entries. 
    
    No interior stationarity equation is imposed at $-1$. Instead, the moment identities imply
    \[
        \sum_i x_i p'(x_i)=0.
    \]
    Therefore,
    \[
        2cQ=\sum_i x_iR(x_i) =a(2a-1)-bR(-1).
    \]
    Since $cQ=a(a-1)$, this gives $R(-1)=a/b$.

    This identity also describes variations that leave the face on which all minimum entries are fixed. If $z(t)$, $t\geq0$, is any differentiable feasible curve with $z(0)=x$ and $z'(0)=v$, then $v_i\geq0$ for $i\in I_-$. Differentiating the moments and using \eqref{eq:upper_stationarity}, we obtain
    \begin{equation}\label{eq:upper_boundary_first_variation}
        Q\left.\frac{d}{dt}\Phi(z(t))\right|_{t=0^+}
        =(2a-1)v_1-\sum_i R(x_i)v_i = -\frac{a}{b}\sum_{i\in I_-}v_i\leq0.
    \end{equation}
    Equality holds only if every minimum entry has zero initial velocity. Thus any first-order direction moving a minimum entry upward strictly decreases the objective.

    Finally, let $v$ satisfy \eqref{eq:upper_tangent_conditions}, and choose a smooth feasible curve with this initial velocity that keeps all minimum entries fixed for the whole curve. The first variation vanishes. Differentiating a second time, the acceleration terms cancel by stationarity and the
    differentiated moment equations, giving
    \[
        Q\left.\frac{d^2}{dt^2}\Phi(z(t))\right|_{t=0}=2v_1^2-\sum_iR'(x_i)v_i^2.
    \]
    Local maximality proves \eqref{eq:upper_second_variation}.
\end{proof}

In particular, if an interior support location $r\in(-1,a)$ has multiplicity at least two, we may give two copies velocities $1$ and $-1$. The tangent conditions hold, so
\begin{equation}\label{eq:upper_interior_splitting}
    R'(r)\geq0.
\end{equation}

Since $p$ has no linear term, $R(0)=0$ and $R'(0)=2c>0$. Zero entries therefore cause no loss of regularity and can remain in the fixed-size tuple throughout the local arguments. They do not contribute to the moments or the quadratic sum, and will not be counted as nonzero support locations.

Since $p$ contains only odd powers of degrees at least three, we may write
\begin{equation}\label{eq:R_form}
    R(z)=2c\left(z-h(z^2)\right), \qquad \deg h\leq k,\qquad h(0)=0.
\end{equation}
In particular, $R(z)-R(-z)=4cz$. Thus two opposite nonzero values cannot both occur as interior entries, since stationarity would make both values of $R$ zero. Moreover,
\[
    R(1)=R(-1)+4c=\frac{a}{b}+4c>0,
\]
so $1$ cannot occur as an entry either. Since the minimum is $-1$ and the maximum is $a>1$, it follows that distinct nonzero values occurring in the multiset have distinct squares.

\subsection{Reduction to minimal support}
We continue to write $x_1,\ldots,x_N$ for the individual entries, with $x_1=a$. Let $r_1,\ldots,r_s$ be the residual support locations. Write $m_j$ for the multiplicity of $r_j$, and put $y_j:=r_j^2$, its squared magnitude. We reserve index $0$ for the top and index $s$ for the minimum:
\[
    r_0=a,\qquad m_0=1,\qquad y_0=a^2, \qquad r_s=-1,\qquad m_s=b,\qquad y_s=1.
\]
The interior residual support locations $r_1,\ldots,r_{s-1}$ are indexed in increasing order of their squared magnitudes. These squares are distinct by the preceding subsection. Throughout the rest of this section, $i$ indexes individual coordinates, while $j$ indexes support locations. Zero entries remain part of the fixed-size tuple but contribute nothing to the identities
\[
    Q=\sum_{j=0}^s m_jy_j,\qquad \sum_{j=0}^s m_jr_j^{2\ell+3}=0 \quad(\ell=0,\ldots,k-1).
\]

In this notation, \eqref{eq:upper_stationarity} and \eqref{eq:R_form} give
\begin{equation}\label{eq:h_values}
    \begin{aligned}
        h(y_0)&=r_0-\frac{2r_0-1}{2c},\\
        h(y_j)&=r_j,
            &&j=1,\ldots,s-1,\\
        h(1)&=-1-\frac{r_0}{2cm_s},\\
        R'(r_j)&=2c\left(1-2r_jh'(y_j)\right),
            &&j=0,\ldots,s.
    \end{aligned}
\end{equation}

\begin{lemma}[Minimal support]\label{lem:upper_minimal_support}
    Under the hypotheses of Lemma~\ref{lem:upper_local_optimality}, the residual support has size $s=k$. Thus $x$ has exactly $k+1=(g-1)/2$ distinct nonzero values.
\end{lemma}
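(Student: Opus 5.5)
The plan is to prove the two inequalities $s\geq k$ and $s\leq k$ separately. The first is already contained in the proof of Lemma~\ref{lem:upper_local_optimality}; the second comes from a second-variation argument using the Lagrange identities of Lemma~\ref{lem:lagrange_interpolation}.

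\emph{Lower bound $s\ge k$.} In the regularity part of the proof of Lemma~\ref{lem:upper_local_optimality} we showed that the entries strictly below the maximum have at least $k$ distinct nonzero absolute values. Otherwise the polynomial $f(z)=z^3\prod_j(z^2-u_j^2)$ lies in the constrained span and violates $\sum_i f(x_i)=0$. These values are residual support locations, so $s\geq k$. Adding the top entry gives at least $k+1$ distinct nonzero values.

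\emph{Upper bound $s\le k$.} Suppose for contradiction that $s\geq k+1$. Let $S=\{0,1,\ldots,s-1\}$, which indexes the top and the interior residual locations. Choose one coordinate representing each $r_j$, $j\in S$, and consider vectors $v$ supported on these $s$ coordinates with $v_i=0$ on $I_-$. Put $W(z)=\prod_{j\in S}(z-y_j)$; the $y_j$ are distinct by the previous subsection. Define
\[
    v_{(j)}:=\frac{\phi(y_j)}{y_jW'(y_j)},\qquad j\in S,
\]
for a polynomial $\phi$ to be chosen. The tangent conditions \eqref{eq:upper_tangent_conditions} then read $\sum_{j\in S}y_j^{\ell-1}\phi(y_j)/W'(y_j)=0$ for $\ell=1,\ldots,k$. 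By \eqref{eq:lagrange_cancellation} these hold whenever $\deg\phi\leq s-1-k$. Since $s\geq k+1$, the constant $\phi\equiv1$ is admissible, and more freedom is available when $s>k+1$.

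Substituting into \eqref{eq:upper_second_variation} and using $R'(r_j)=2c(1-2r_jh'(y_j))$ from \eqref{eq:h_values}, the inequality becomes
\[
    \sum_{j\in S}\frac{2c\bigl(1-2r_jh'(y_j)\bigr)\phi(y_j)^2}{y_j^2W'(y_j)^2}-\frac{2\phi(y_0)^2}{y_0^2W'(y_0)^2}\;\geq\;0 .
\]
The plan is to evaluate this sum exactly. The values $h(y_j)$ are prescribed by \eqref{eq:h_values}: they equal $r_j$ for $1\le j\le s-1$, and $h(y_0)$ is given. Since $\deg h\leq k\le s-1$, $h$ coincides with its Lagrange interpolant on the nodes $y_0,\dots,y_{s-1}$, so $h'(y_j)$ can be expressed through \eqref{eq:lagrange_derivatives}. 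This yields a double sum over pairs of nodes with kernel $1/(y_i-y_j)$. Symmetrizing it, together with $r_j^2=y_j$, should turn the quadratic form into a negative sum of squares of the shape $-\sum_{i\ne j}(\ldots)(r_i-r_j)^2/(y_i-y_j)^2$, plus the top-entry correction. The correction involves $h(y_0)-r_0=-(2r_0-1)/(2c)<0$, which has the favourable sign.

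The main obstacle is this last step: organizing the interpolation computation so that the total is visibly strictly negative. The sign of each $r_j$ enters through the factor $r_jh'(y_j)$, and the identity $R(z)-R(-z)=4cz$ must be used to control it. If the constant choice $\phi\equiv1$ does not give a definite sign, I would first use the extra freedom when $s\geq k+2$. In the borderline case $s=k+1$, I would instead replace one interior location by the minimum $-1$ in the node set and allow a one-sided upward velocity there, which \eqref{eq:upper_boundary_first_variation} penalizes at first order. Either route should contradict local maximality, forcing $s=k$ and hence exactly $k+1=(g-1)/2$ distinct nonzero values.
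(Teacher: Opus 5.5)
Your bound $s\ge k$ is fine, and so is the framework of second variations along velocities built from \eqref{eq:lagrange_cancellation}. The upper bound $s\le k$, however, is not proved. The step you call the main obstacle is the whole lemma, and the routes you sketch either miss the decisive identity or cannot work.

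\textbf{The case $s\ge k+2$.} The missing idea is to keep the top fixed. Take $\phi(y)=y-y_0$, which amounts to one copy at each of $k+1$ interior locations with velocity $1/(y_jV'(y_j))$, where $V(y)=\prod_{j=1}^{k+1}(y-y_j)$. The rational function $(h(y)^2-y)/(y^2V(y)^2)$ is then $O(y^{-4})$, and its numerator vanishes at $0$ and at every node because $h(y_j)=r_j$ and $r_j^2=y_j$. Its residues are therefore $-R'(r_j)/(2c\,y_j^2V'(y_j)^2)$ at the nodes and $-1/V(0)^2$ at $0$. The residue theorem gives $\sum_j R'(r_j)/(y_j^2V'(y_j)^2)=-2c/V(0)^2<0$ directly, so no sum-of-squares symmetrization is needed. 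With your choice $\phi\equiv1$ the top is a node. Since $h(y_0)\neq r_0$, the pole at $y_0$ becomes second order, with a residue involving $h(y_0)h'(y_0)$ and $W''(y_0)$. That does not match the term $R'(r_0)-2$ in your quadratic form, so the sign is lost.

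\textbf{The case $s=k+1$.} Here your family of directions is one-dimensional ($\phi$ constant) and must move the top, and the proposal has none of the needed ingredients. The paper's argument runs as follows:
\begin{itemize}
\item It moves all copies at each location together, with velocity $1/(m_jy_jV'(y_j))$ on the nodes $y_0,\dots,y_k$.
\item It uses the moment identity $\sum_j m_jr_j^3/(y-y_j)=L(y)/((y-1)V(y))$ with $\deg L\le1$. A sign-change comparison against $h$ then shows that $1$ is the largest residual square and that $L$ is linear with root $y^*\in(0,1)$.
\item It eliminates $R'(r_0)$ by interpolating $K(y)=h(y)/y-2h'(y)$, and removes the multiplicities via \eqref{eq:upper_interior_splitting}.
\item It concludes with the residue theorem for $-2c(y-y_0)(h(y)^2-y)/(y^2(y-1)V(y)^2)$.
\end{itemize}
The decisive input is the multiplier value at the minimum, $h(1)=-1-r_0/(2cm_s)$, which comes from $R(-1)=a/b$. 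The residue at $y_0$, produced by $h(y_0)=r_0-(2r_0-1)/(2c)$, is only bounded below by a negative quantity. It is compensated by the residue at $y=1$, so the top's contribution is not simply ``favourable'' as you claim.

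Your fallback of putting $-1$ among the nodes with an upward velocity cannot replace this. By \eqref{eq:upper_boundary_first_variation}, such a direction strictly decreases $\Phi$ at first order, which is perfectly consistent with local maximality, and the reverse direction is infeasible. A contradiction needs a feasible direction along which $\Phi$ increases. The minimum enters the argument through its multiplier, not through a variation.
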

\begin{proof}
The moment constraints already force $s\geq k$, so it remains to show that a local maximizer cannot have more.

\subsubsection{At most one extra interior support location}

First suppose that $s\geq k+2$, so that $r_1,\ldots,r_{k+1}$ are distinct nonzero interior support locations. Put
\[
    V(y):=\prod_{j=1}^{k+1}(y-y_j).
\]
At each support location $r_j$, $j=1,\ldots,k+1$, choose one copy and give it initial velocity
\[
    \frac{1}{y_jV'(y_j)}.
\]
Give every other individual entry initial velocity zero, including the top and all minimum entries. Denote the resulting individual-coordinate velocity vector by $v$. This vector is tangent to the moment constraints, since for each $\ell=0,\ldots,k-1$,
\[
    \sum_{i=1}^N(2\ell+3)x_i^{2\ell+2}v_i=(2\ell+3)\sum_{j=1}^{k+1}\frac{y_j^\ell}{V'(y_j)}=0.
\]
The last equality follows from \eqref{eq:lagrange_cancellation}, applied to $f(y)=y^\ell$ at the $k+1$ nodes $y_1,\ldots,y_{k+1}$.
Since $v_1=0$ and all minimum entries have zero initial velocity, Lemma~\ref{lem:upper_local_optimality} applies to this vector and yields
\begin{equation}\label{eq:too_many_distinct_values_1}
    \sum_{j=1}^{k+1}\frac{R'(r_j)}{y_j^2V'(y_j)^2}\geq0.
\end{equation}
There are no multiplicity factors in this sum, since only one copy at each selected support location has nonzero initial velocity.

Together with $h(0)=0$, the identities in \eqref{eq:h_values} give the partial fraction decomposition
\[
    \frac{h(y)^2-y}{y^2V(y)^2}=-\frac{1}{V(0)^2y}-\frac{1}{2c}\sum_{j=1}^{k+1}\frac{R'(r_j)}{y_j^2V'(y_j)^2(y-y_j)}.
\]
Indeed, the numerator vanishes at every $y=y_j$ and at $y=0$, so the left-hand side has poles of order at most one at these points. The coefficients of the partial fraction decomposition are determined by the residues at these poles. Since $\deg h\leq k$ and $\deg V=k+1$, the left-hand side is $O(y^{-4})$ as $y\to\infty$. The vanishing of the $y^{-1}$ term on the right-hand side therefore gives
\begin{equation}\label{eq:too_many_distinct_values_2}
    \sum_{j=1}^{k+1}\frac{R'(r_j)}{y_j^2V'(y_j)^2}=-\frac{2c}{V(0)^2}<0,
\end{equation}
contradicting \eqref{eq:too_many_distinct_values_1}.

Thus there are at most $k$ distinct nonzero interior support locations. Including the minimum $r_s=-1$, the residual support therefore has size $s\in\{k,k+1\}$. It remains to exclude the latter case $s=k+1$.

\subsubsection{Excluding the case of one extra interior support location}
For this case we also need to include the top support location $r_0$ in the tangent vector. Indeed, we redefine 
$V(y):=\prod_{j=0}^{k}(y-y_j)$, and give every copy at support location $r_j$ the same initial velocity
\[
    u_j:=\frac{1}{m_j y_j V'(y_j)}, \qquad j=0,\dots,k,
\]
with all minimum and zero entries remaining fixed.

The resulting vector is tangent to the moment constraints because
\[
    \sum_{j=0}^{k} m_j r_j^{2\ell} u_j = \sum_{j=0}^{k} \frac{y_j^{\ell-1}}{V'(y_j)} = 0, \quad \ell=1,2,\dots,k
\]
by \eqref{eq:lagrange_cancellation}, applied to $f(y)=y^{\ell-1}$ at the $k+1$ nodes $y_0,\ldots,y_k$. Consequently, Lemma~\ref{lem:upper_local_optimality} implies the inequality
\[
    \sum_{j=0}^{k} m_j R'(r_j) u_j^2 -2u_0^2 \geq 0,
\]
or equivalently,
\begin{equation}\label{eq:one_extra_value_1}
    \frac{R'(r_0)-2}{y_0^2 V'(y_0)^2}+\sum_{j=1}^{k} \frac{R'(r_j)}{m_j y_j^2 V'(y_j)^2}  \geq 0.
\end{equation}

We will show that the left-hand side of \eqref{eq:one_extra_value_1} is strictly negative, contradicting
local optimality and excluding the case $s=k+1$.

To this end, we will eliminate $R'(r_0)$ and express the remaining terms as a sum over interior support locations with positive coefficients, together with a strictly negative correction. The splitting condition will then allow us to remove the multiplicities from an upper bound. We first record how the moment constraints relate the multiplicities to the support locations. The sum
\[
    \sum_{j=0}^{k+1} \frac{m_j r_j^3}{y-y_j}
\]
has vanishing coefficients for $y^{-1},y^{-2},\ldots,y^{-k}$ when expanded at infinity. Putting it over a common denominator gives
\begin{equation}\label{eq:frac_moment_identity}
    \sum_{j=0}^{k+1} \frac{m_j r_j^3}{y-y_j} = \frac{L(y)}{(y-1)V(y)}
\end{equation}
for some polynomial $L$. The left-hand side is $O(y^{-k-1})$ at infinity and the denominator has degree $k+2$, so $\deg L\leq1$. Comparing residues gives
\begin{equation}\label{eq:L_values}
    L(y_j)=m_jr_j^3(y_j-1)V'(y_j), \qquad j=0,\ldots,k.
\end{equation}

We now show that $L$ is linear and its root lies in $(0,1)$. Any consecutive pair of poles in \eqref{eq:frac_moment_identity} whose residues have the same sign forces a zero of $L$ between them. Since $L$ is nonzero and $\deg L\leq1$, there can be at most one such pair. There are $k+1$ adjacent pairs, so the residue signs have at least $k$ sign changes. These are precisely the signs of the support locations $r_j$, ordered by their squares $y_j$.

On the other hand, the values $h(y_j)$, read in the same order, have at most $k-1$ sign changes, since $\deg h\leq k$ and $h(0)=0$ by \eqref{eq:R_form}. By \eqref{eq:h_values}, their signs agree with those of $r_j$ at every interior support location and at the minimum. At the top, $Q\geq r_0^2$ gives
\[
    2c h(y_0)=2cr_0-(2r_0-1)\leq2(r_0-1)-(2r_0-1)=-1<0.
\]
Thus the two sequences differ only in their final sign, at $y_0$. Reversing that sign changes the count by exactly one, so the support signs have exactly $k$ changes and the values of $h$ have exactly $k-1$. In particular, reversing the final sign removes a sign change.

The support location immediately preceding the top in square order is therefore negative, so its square is at most $1$. Since the minimum support location $-1$ is present, the largest residual square must equal $1$. Consequently,
\[
    0<y_1<\cdots<y_k<y_{k+1}=1<y_0.
\]
Exactly one of the $k+1$ adjacent pairs has equal signs. The final pair has opposite signs, so this same-sign pair lies among the residual poles in $(0,1]$. It forces a zero of $L$ between them. Hence $L$ is linear and its unique root lies in $(0,1)$.

Now write $L(y)=\lambda(y-y^*)$ with $y^*\in(0,1)$. By \eqref{eq:L_values},
\[
    L(y_0)=r_0^3(y_0-1)V'(y_0)>0,
\]
so $\lambda>0$.

We next seek an interpolation identity that expresses the top derivative in terms of the interior derivatives. For $j=1,\ldots,k$, \eqref{eq:h_values} gives
\[
    \frac{R'(r_j)}{r_j}=2c\left(\frac{h(y_j)}{y_j}-2h'(y_j)\right).
\]
This suggests defining
\[
    K(y):=\frac{h(y)}{y}-2h'(y).
\]
Since $h(0)=0$, this is a polynomial of degree at most $k-1$. Applying \eqref{eq:lagrange_cancellation} to $K$ at the $k+1$ nodes $y_0,\ldots,y_k$ gives
\[
    \sum_{j=0}^k\frac{K(y_j)}{V'(y_j)}=0.
\]
At the top, \eqref{eq:h_values} gives
\[
    2cK(y_0)=\frac{R'(r_0)}{r_0}-\frac{2r_0-1}{r_0^2}.
\]
Thus the interpolation identity becomes
\begin{equation}\label{eq:upper_top_elimination}
    \sum_{j=0}^k\frac{R'(r_j)}{r_jV'(y_j)}=\frac{2r_0-1}{r_0^2V'(y_0)}.
\end{equation}

We use \eqref{eq:upper_top_elimination} to eliminate $R'(r_0)$ from the left-hand side of \eqref{eq:one_extra_value_1}. For each interior summand, \eqref{eq:L_values} gives the coefficient identity
\[
    1-\frac{m_jr_j^3V'(y_j)}{r_0^3V'(y_0)}=1-\frac{(y_0-1)L(y_j)}{(y_j-1)L(y_0)}=\frac{(1-y^*)(y_0-y_j)}{(y_0-y^*)(1-y_j)}.
\]
Consequently,
\begin{equation}\label{eq:upper_interior_reduction}
    \begin{aligned}
        &\frac{R'(r_0)-2}{y_0^2V'(y_0)^2}
        +\sum_{j=1}^k\frac{R'(r_j)}{m_jy_j^2V'(y_j)^2}\\
        &\qquad=\frac{1-y^*}{y_0-y^*}
        \sum_{j=1}^k\frac{(y_0-y_j)R'(r_j)}{m_j(1-y_j)y_j^2V'(y_j)^2}
        -\frac{1}{r_0^5V'(y_0)^2}.
    \end{aligned}
\end{equation}
All coefficients in the sum on the right are positive. If $m_j\geq2$, the splitting condition \eqref{eq:upper_interior_splitting} gives $R'(r_j)\geq0$, so $R'(r_j)/m_j\leq R'(r_j)$. The same inequality holds with equality when $m_j=1$. We may therefore remove the multiplicities from this sum in an upper bound. Since the final term in \eqref{eq:upper_interior_reduction} is strictly negative, it remains to prove that
\begin{equation}\label{eq:upper_unweighted_sum}
    \sum_{j=1}^k\frac{(y_0-y_j)R'(r_j)}{(1-y_j)y_j^2V'(y_j)^2}\leq0.
\end{equation}

As in the proof of \eqref{eq:too_many_distinct_values_2}, we construct a rational function whose residues are equal to the summands in \eqref{eq:upper_unweighted_sum}. Specifically, consider the function
\[
    f(y) := \frac{-2c(y-y_0)(h(y)^2-y)}{y^2(y-1)V(y)^2}.
\]
The residues of $f(y)$ at $y=y_j$ for $j=1,\dots,k$ are equal to the summands in \eqref{eq:upper_unweighted_sum}. The remaining possible poles are located at $y\in\{0,1,y_0\}$. Since $f(y)=O(y^{-4})$ as $y\to\infty$, the sum of all residues must be zero. Consequently, the sum in \eqref{eq:upper_unweighted_sum} equals the negative of the sum of the residues at these three points. We directly calculate these residues:
\begin{align*}
    \text{Res}(f,0) &= \frac{2cy_0}{V(0)^2}\geq0,\\
    \text{Res}(f,1) &= \frac{2c(y_0-1)(h(1)^2-1)}{V(1)^2},\\
    \text{Res}(f,y_0) &= -\frac{2c(h(y_0)^2-y_0)}{y_0^2(y_0-1)V'(y_0)^2}.
\end{align*}
Now we observe from \eqref{eq:h_values} that
\[
    h(1)^2-1 >\left(\frac{r_0}{2cm_s}\right)^2, \quad h(y_0)^2-y_0 < \left(\frac{2r_0-1}{2c}\right)^2.
\]
Also \eqref{eq:L_values} along with $L(y)=\lambda(y-y^*)$ implies that
\[
    \lambda(1-y^*)=-m_s V(1), \quad \lambda(y_0-y^*)=r_0^3(y_0-1) V'(y_0).
\]
Substituting these expressions into the residues calculated above, we obtain
\begin{align*}
    \text{Res}(f,1) & \geq \frac{2c(y_0-1)\left(\frac{r_0}{2cm_s}\right)^2}{V(1)^2} = \frac{y_0(y_0-1)}{2 c\lambda^2 (1-y^*)^2}, \\
    \text{Res}(f,y_0) & \geq -\frac{2c\left(\frac{2r_0-1}{2c}\right)^2}{y_0^2(y_0-1)V'(y_0)^2} = -\frac{y_0(y_0-1)(2r_0-1)^2}{2c\lambda^2 (y_0-y^*)^2}.
\end{align*}
and therefore
\[
    \text{Res}(f,1)+\text{Res}(f,y_0) \geq \frac{y_0(y_0-1)}{2c\lambda^2} \left( \frac{1}{(1-y^*)^2} - \frac{(2r_0-1)^2}{(y_0-y^*)^2} \right) \geq0,
\]
where the last inequality follows from
\[
    (y_0-y^*) - (2r_0-1)(1-y^*) = (r_0-1)(r_0-1+2y^*)>0.
\]
This finally proves \eqref{eq:upper_unweighted_sum}, which contradicts \eqref{eq:one_extra_value_1} and \eqref{eq:upper_interior_reduction}, and therefore excludes the case $s=k+1$.
\end{proof}

\subsection{A signed tail-counting function and splitting inequalities}

\begin{lemma}[Multiplicity and sign alternation]\label{lem:upper_multiplicity_floor}
    Under the hypotheses of Lemma~\ref{lem:upper_local_optimality}, every residual support location has multiplicity at least two. The distinct nonzero values, ordered by their magnitudes, have alternating signs, and the largest residual magnitude is $1$.
\end{lemma}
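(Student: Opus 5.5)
The plan is to exploit that, by Lemma~\ref{lem:upper_minimal_support}, the support is now minimal ($s=k$), so the moment identities determine the multiplicities up to one scalar, and then to feed that explicit formula into the stationarity data \eqref{eq:h_values}.

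\emph{Step 1 (multiplicity formula).} With $s=k$ there are $k+1$ squared magnitudes $y_0,\ldots,y_k$, which are distinct, and $k$ identities $\sum_{j=0}^k m_jr_j^3y_j^\ell=0$ for $\ell=0,\ldots,k-1$. By \eqref{eq:lagrange_cancellation} the vector $(1/W'(y_j))_j$, with $W(y):=\prod_{j=0}^k(y-y_j)$, spans the solution space of this Vandermonde system. Hence
\[
    m_jr_j^3=\frac{C}{W'(y_j)},\qquad C=a^3W'(y_0).
\]
Equivalently, the signed tail-counting function $F(t):=\sum_{i:\,x_i^2\geq t}\sign(x_i)|x_i|^3$ changes sign exactly $k$ times.

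\emph{Step 2 (sign alternation and largest residual magnitude).} The signs of $W'(y_j)$ alternate as $y_j$ increases, and $m_j>0$, so the signs of the $r_j$ alternate in order of magnitude. The top $a$ is positive, so the location just below it in square order is negative. Since every entry is at least $-1$, that location has square at most $1$. The value $-1$ is itself a residual location, so no residual square can lie in $(1,a^2)$. Therefore the largest residual magnitude is $1$, attained by $r_k=-1$, and $0<y_1<\cdots<y_{k-1}<y_k=1<y_0$.

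\emph{Step 3 (multiplicity at least two).} From Step 1 I will write
\[
    m_j=\Bigl(\frac{a}{|r_j|}\Bigr)^3\prod_{i\ne0,j}\frac{a^2-y_i}{|y_j-y_i|},
\]
and aim to show this is at least $2$. For the factors with $y_i<y_j$ we have $a^2-y_i>y_j-y_i$, and the prefactor exceeds $1$. The dangerous factors are those with $y_j<y_i\leq1$ when $a^2$ is close to $1$. To control them I will bring in stationarity. By \eqref{eq:h_values}, $h(y_j)=r_j$ at the interior nodes, $h(1)=-1-a/(2cb)$, and $2ch(y_0)\leq-1$. Together with $\deg h\leq k$ and $h(0)=0$, this pins $h$ as the interpolant through these alternating values. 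This is the same sign-change bookkeeping as in the exclusion of $s=k+1$, now comparing the $k$ sign changes of $F$ with the $k-1$ sign changes of $h$.

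I will first treat the minimum ($j=k$) using $R(-1)=a/b$ and a residue computation for the rational function $(h(y)^2-y)/(y^2W(y)^2)$, as in \eqref{eq:too_many_distinct_values_2}. I expect this to force $b$ large relative to $a/c$. For an interior simple location I will then argue by contradiction. If $m_j=1$, I perturb by splitting off a fractional copy in a continuous relaxation, weighing it against the top through the second variation \eqref{eq:upper_second_variation}. The top derivative is eliminated via the analogue of \eqref{eq:upper_top_elimination} with $W$ in place of $V$.

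The main obstacle is Step 3. With the minimum entries frozen, the tangent space of Lemma~\ref{lem:upper_local_optimality} permits only equal-velocity motion of whole groups and splitting of repeated values. So a simple location gives no direct second-order inequality, and the lower bound $m_j\geq2$ must come from combining the explicit multiplicity formula with the boundary first-variation inequality \eqref{eq:upper_boundary_first_variation} and the interpolation constraints on $h$.
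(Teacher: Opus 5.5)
Your Steps 1 and 2 are correct. With $s=k$ the vector $(m_jr_j^3)_j$ spans the one-dimensional kernel of the Vandermonde system, and its signs follow those of $W'(y_j)$. This gives the sign alternation and $y_k=1$ by a route different from the paper's, and it works perfectly well for that part of the statement.

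The gap is Step 3, which lists tools rather than giving an argument, and those tools cannot deliver $m_j\geq2$. That bound depends on the $m_j$ being integers, and nothing in your plan uses integrality. The case $g=5$ makes this plain. There $k=1$, your own formula gives $m_1=b=a^3$, and the lemma says exactly that an integer exceeding $1$ is at least $2$. As a real number, $a^3$ can be anything in $(1,\infty)$, so no estimate of your product formula can succeed without integrality. Stationarity adds nothing either. The identity $R(-1)=a/b$ is derived in the paper from the moment identities and the other stationarity equations, so it carries no independent information. Concretely, for $g=5$ and $N=3$ the maximizer is $(2^{1/3},-1,-1)$: it has $b=2$ while $a/c\approx 13.8$, so $b$ is not forced to be large relative to $a/c$. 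Finally, splitting off a fractional copy is not an admissible variation. Lemma~\ref{lem:upper_local_optimality} gives second-order information only for vectors in $\R^N$ at an $N$-tuple. Passing to a weighted relaxation throws away exactly the integrality the conclusion depends on.

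The missing idea is to replace your weighted $F$ by the unweighted, integer-valued count $C(t):=\sum_{|x_i|>t}\sign(x_i)$. Since $\int_0^\infty t^{2\ell}C(t)\,dt=\frac{1}{2\ell+1}\sum_i x_i^{2\ell+1}=0$ for $\ell=1,\ldots,k$, $C$ has at least $k$ sign changes. Otherwise a suitable $t^2S(t^2)$ with $\deg S\leq k-1$ would have constant sign against it. By minimal support, $C$ has exactly $k$ jumps in $(0,a)$, equals $1$ just below $a$, and vanishes beyond $a$. Hence its $k+1$ values on $(0,a)$ are nonzero integers of alternating sign. Each jump has size $m_j$ and goes between a value $\geq1$ and a value $\leq-1$, so $m_j\geq2$. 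The same count also gives the sign alternation and $y_k=1$, so neither the Vandermonde formula nor any stationarity data is needed. Your $F$ has the same sign pattern, but its jumps $m_j|r_j|^3$ are not integers. That is why it detects the alternation but not the multiplicity bound.
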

\begin{proof}
We construct the signed tail-counting function
\begin{equation}\label{eq:root_counting_function}
    C(t):= \sum_{|x_i| >t} \sign(x_i), t>0.
\end{equation}
This function captures the moment constraints by the following identity:
\[
    \int_0^\infty t^{2\ell}C(t)\,dt = \frac{1}{2\ell+1}\sum_{i=1}^N x_i^{2\ell+1}=0, \quad \ell=1,2,\dots,k.
\]
A first consequence of this identity is that $C(t)$ must change sign at least $k$ times for $t>0$. Otherwise, there would exist a polynomial $S$ of degree at most $k-1$ such that $t^2S(t^2)C(t)$ is nonnegative for $t>0$ and strictly positive on some interval, contradicting the integral identity.

On the other hand, by Lemma~\ref{lem:upper_minimal_support}, the function $C$ has exactly $k$ jump locations on the interval $(0,r_0)$, namely the distinct residual magnitudes $|r_j|$. Since it has at least $k$ sign changes, every jump must connect nonzero values of opposite signs. At each such location $t=|r_j|$, the jump has absolute size $m_j$, so we obtain
\[
    m_j=\left|C(|r_j|^-)-C(|r_j|^+)\right| \geq 1-(-1)=2,\qquad j=1,\ldots,k.
\]
Another immediate consequence is that the distinct nonzero values, ordered by their magnitudes, must have alternating signs. Since $C(t)=1$ between the largest residual magnitude and $r_0$, crossing the largest residual jump toward smaller $t$ changes $C$ from positive to negative. The corresponding residual support location is therefore negative. As $-1$ is present and is the minimum value, the largest residual magnitude must equal $1$. Therefore, in our indexing convention
\[
    0<y_1<y_2<\cdots<y_k=1<y_0,
\]
the signs of $r_j$ satisfy
\[
    \sign(r_j) = (-1)^{k-j+1},\qquad j=1,\ldots,k.
\]
\end{proof}

By Lemma~\ref{lem:upper_multiplicity_floor}, we can use the splitting stability inequality \eqref{eq:upper_interior_splitting} to conclude
\[
    R'(r_j)\geq 0,\qquad j=1,\ldots,k-1,
\]
and then use \eqref{eq:h_values} to obtain
\begin{equation}\label{eq:splitting_stability_2}
    2r_jh'(y_j)\leq 1,\qquad j=1,\ldots,k-1.
\end{equation}

We then proceed to convert this relation into inequalities concerning only $r_j$ and $m_j$. To this end we interpolate the polynomial $h(y)/y$ (with degree at most $k-1$) at the points $y_1,\ldots,y_{k}$ using the Lagrange interpolation formula. In fact \eqref{eq:h_values} gives
\[
    \frac{h(y_j)}{y_j}=\frac1{r_j}\quad(j=1,\ldots,k-1), \qquad \frac{h(1)}1=-1-\frac{r_0}{2cm_k},
\]
and therefore \eqref{eq:lagrange_interpolation} implies
\begin{equation}\label{eq:h_interpolation}
    \frac{h(y)}{y}=\sum_{j=1}^{k} \frac{L_j(y)}{r_j} - \frac{r_0}{2cm_k}L_k(y),
\end{equation}
where
\[
    L_j(y) = \prod_{\substack{\ell=1\\\ell\ne j}}^{k} \frac{y-y_\ell}{y_j-y_\ell},\qquad j=1,\ldots,k.
\]
We differentiate both sides of \eqref{eq:h_interpolation} with respect to $y$, and then use \eqref{eq:h_values} to obtain for $j=1,2,\ldots,k-1$
\[
    h'(y_j)=\frac1{r_j}+y_j\sum_{\ell=1}^k\frac{L_\ell'(y_j)}{r_\ell}-\frac{r_0y_j}{2cm_k}L_k'(y_j).
\]
Substituting this relation into \eqref{eq:splitting_stability_2} yields
\begin{equation}\label{eq:splitting_stability_3}
    1+2r_jy_j\sum_{\ell=1}^k\frac{L_\ell'(y_j)}{r_\ell}-\frac{r_0r_jy_j}{cm_k}L_k'(y_j)\leq 0,\qquad j=1,\ldots,k-1.
\end{equation}
Let $V(y):=\prod_{\ell=1}^{k} (y-y_\ell)$. We express the derivatives of the Lagrange basis polynomials in \eqref{eq:splitting_stability_3} using \eqref{eq:lagrange_derivatives}, and recall the definition of $c$ in Lemma~\ref{lem:upper_local_optimality}, to obtain
\begin{equation}\label{eq:splitting_stability_4}
    1+2y_j\sum_{\substack{\ell=1 \\ \ell\neq j}}^{k}\frac{1+\frac{r_j V'(y_j)}{r_\ell V'(y_\ell)}}{y_j-y_\ell}-\frac{Qy_j}{m_k(r_0-1)(y_j-1)}\frac{r_jV'(y_j)}{V'(1)}\leq 0,\, j=1,\ldots,k-1
\end{equation}
where $Q=\sum_{j=0}^{k} m_j y_j$ is the second moment.

The expression $\frac{r_j V'(y_j)}{r_\ell V'(y_\ell)}$ in \eqref{eq:splitting_stability_4} can be further simplified. Indeed, since $\deg L_j=k-1$, the moment constraints imply
\[
    0= \sum_{\ell=0}^{k}m_\ell r_\ell^3 L_j(y_\ell) =r_0^3L_j(y_0)+m_j r_j^3=\frac{r_0^3V(y_0)}{(y_0-y_j)V'(y_j)}+m_j r_j^3, \, j=1,\ldots,k.
\]
Therefore 
\[
    m_j r_j^3(y_0-y_j)V'(y_j) = -r_0^3V(y_0)
\]
is independent of $j$, leading to the identity
\begin{equation}\label{eq:vandermonde_simplification}
    \frac{r_j V'(y_j)}{r_\ell V'(y_\ell)} = \frac{m_\ell y_\ell (y_0-y_\ell)}{m_j y_j (y_0-y_j)}
\end{equation}
for all $j,\ell=1,\ldots,k$. 

We then substitute the simplification \eqref{eq:vandermonde_simplification} into \eqref{eq:splitting_stability_4} to obtain inequalities that involve only the variables $r_j$, $m_j$, and $y_j=r_j^2$, namely
\begin{equation}\label{eq:splitting_stability_5}
    1+2y_j\sum_{\substack{\ell=1 \\ \ell\neq j}}^{k}\frac{1}{y_j-y_\ell}+\frac{2}{m_j (y_0-y_j)}\sum_{\substack{\ell=1 \\ \ell\neq j}}^{k}\frac{m_\ell y_\ell (y_0-y_\ell)}{y_j-y_\ell}\leq\frac{Q(r_0+1)}{m_j (y_0-y_j)(1-y_j)}.
\end{equation}
The inequality \eqref{eq:splitting_stability_5}, which holds for all $j=1,\ldots,k-1$, is the final form of the splitting stability condition. This will allow us to bound the second moment $Q$ from below. Combined with an estimate relating $r_0$ to $Q$, obtained from the moment constraints, this will thus bound the objective $c=r_0(r_0-1)/Q$ from above.

\subsection{Polynomial comparison and block estimates}

We use the splitting inequality \eqref{eq:splitting_stability_5} to bound the number of residual squares at or below an interior cutoff.

\begin{lemma}[Polynomial comparison]\label{lem:upper_polynomial_comparison}
    Let $x$ be a local maximizer as in Lemma~\ref{lem:upper_local_optimality}, with the notation above. Fix $1\leq j<k$ and put $t:=y_j$. Suppose that $S$ is a real polynomial of degree $d\geq1$, with $S(-s)=(-1)^dS(s)$, satisfying
    \begin{equation}\label{eq:P_envelope}
        S(1)=1,\qquad |S(s)|\leq\prod_{\ell>j}\frac{y_\ell-t}{y_\ell-ts^2}, \qquad |s|\leq1.
    \end{equation}
    Then
    \begin{equation}\label{eq:Q_lower_bound_2}
        2j\leq d+1+\sqrt{\sum_{\ell>j}\frac{m_\ell y_\ell}{y_\ell-t}+\frac{Q}{(1-t)^2}}.
    \end{equation}
    All sums and products range over residual indices $1,\ldots,k$.
\end{lemma}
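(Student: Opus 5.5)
The plan is to read \eqref{eq:splitting_stability_5} at the single index $j$ (with $t=y_j$) as a lower bound on a quadratic quantity built from the nodes $y_1,\ldots,y_k$, and to test that quantity against a polynomial built from $S$. I have not checked that the final constants come out as stated.

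First I will isolate the parts of the left-hand side of \eqref{eq:splitting_stability_5}.
\begin{itemize}
\item For $\ell<j$ we have $y_j-y_\ell>0$, so the term $2y_j/(y_j-y_\ell)$ exceeds $2$. These $j-1$ indices, together with the leading $1$, are where the count $2j$ should come from.
\item For $\ell>j$ the denominators are negative. These terms, and the right-hand side $Q(r_0+1)/(m_j(y_0-y_j)(1-y_j))$, should be absorbed into the square-root term of \eqref{eq:Q_lower_bound_2}. This explains the shape $\sum_{\ell>j}m_\ell y_\ell/(y_\ell-t)+Q/(1-t)^2$: the quantity $(y_0-y_j)/(1-y_j)$ pairs with $Q$ and $r_0+1$, and $(y_0-y_\ell)$ pairs with $m_\ell y_\ell$.
\end{itemize}
The weights $m_\ell y_\ell(y_0-y_\ell)$ are exactly those appearing in \eqref{eq:vandermonde_simplification}. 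So I will rewrite both sums through the identity $m_\ell r_\ell^3(y_0-y_\ell)V'(y_\ell)=-r_0^3V(y_0)$, which encodes the moment constraints.

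Next I will bring in $S$ through the moment constraints. Set $r=\sqrt t\,s$ and define
\[
G(r):=S(r/\sqrt t)\prod_{\ell>j}\frac{y_\ell-r^2}{y_\ell-t}.
\]
This polynomial has the parity of $S$ and vanishes at every residual location with $\ell>j$. It satisfies $G(\pm\sqrt t)=\pm1$ up to parity, and $|G|\leq1$ on $|r|\leq\sqrt t$ by \eqref{eq:P_envelope}.

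The alternation from Lemma~\ref{lem:upper_multiplicity_floor} says that the $j$ nodes $r_1,\ldots,r_j$ alternate in sign with increasing magnitude. A low-degree polynomial bounded by $1$ cannot follow this alternation, which is a Markov/Chebyshev-type rigidity. Quantitatively, I will form the interpolation identity (Lemma~\ref{lem:lagrange_interpolation}, or a residue computation as in Lemma~\ref{lem:upper_minimal_support}) for a rational function of the form
\[
\frac{G(\cdot)^2\,(\cdot)}{(y-y_j)\,V(y)}
\]
or similar. Its residues at $y_\ell$ reproduce the summands of \eqref{eq:splitting_stability_5} weighted by $G(r_\ell)^2$, and its residue at $y_j$ carries the $1/(y_j-y_\ell)$ sums. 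The degree $d$ of $S$ enters through the behaviour at infinity and gives the deficit $d+1$.

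Finally I will apply Cauchy--Schwarz to the cross term. It pairs $\sum_\ell G(r_\ell)m_\ell r_\ell(\ldots)$ against the weights $m_\ell y_\ell/(y_\ell-t)$ for $\ell>j$ and against the top/minimum contribution controlled by $Q/(1-t)^2$. This turns an inequality of the form $(2j-d-1)^2\leq(\ldots)$ into \eqref{eq:Q_lower_bound_2}, and the square root comes precisely from this step.

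The main obstacle is the middle step: choosing the right rational function so that its residues match \eqref{eq:splitting_stability_5} exactly, while the $S$-dependent part stays bounded by the envelope \eqref{eq:P_envelope}. If the direct residue approach fails, I would instead multiply \eqref{eq:splitting_stability_5} by $m_j$ and sum against weights $G(r_j)^2$ over lower indices. The hope is that the $\ell<j$ double sum telescopes, by antisymmetry, to a count of roughly $j^2$ pairs, while the remaining terms are bounded using $|G|\leq1$.
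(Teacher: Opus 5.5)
There is a genuine gap: nothing in your plan produces the quadratic quantity that \eqref{eq:Q_lower_bound_2} actually encodes. For $d\le 2j-1$ the claim is equivalent to $\sum_{\ell>j}m_\ell y_\ell/(y_\ell-t)+Q/(1-t)^2\ge(2j-1-d)^2$.

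Your counting observation, $2y_j/(y_j-y_\ell)>2$ for $\ell<j$, only bounds the left side of \eqref{eq:splitting_stability_5} below by $2j-1$. That bound is linear in $j$ and cannot give a square root. The two steps meant to upgrade it are not carried out: the residue identity is for an unspecified rational function ``or similar'', and the Cauchy--Schwarz pairing is only named. So there is no argument for where $(2j-1-d)^2$ comes from. The paper's proof uses no Cauchy--Schwarz at all. The square is the Markov-extremal endpoint derivative $T_n'(1)=n^2$ of a Chebyshev polynomial, and the deficit $d+1$ comes from a degree budget, not from behaviour at infinity.

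The missing idea is a linear duality with a test polynomial.
\begin{enumerate}
\item Move the $\ell>j$ terms to the right using $m_j\ge2$, $(y_0-y_\ell)/(y_0-t)\le1$ and $(r_0+1)/(y_0-t)\le 2/(1-t)$. This gives \eqref{eq:block_splitting}.
\item Put $V_j(y)=\prod_{\ell\le j}(y-y_\ell)$. By Lagrange interpolation at $y_1,\dots,y_j$, any $q$ of degree at most $j-1$ with $q(t)=1$ satisfies $q'(t)=\sum_{\ell<j}\bigl(\frac{1}{t-y_\ell}+\frac{q(y_\ell)}{t-y_\ell}\frac{V_j'(t)}{V_j'(y_\ell)}\bigr)$.
\item By \eqref{eq:vandermonde_simplification}, the left side of \eqref{eq:block_splitting} then dominates $1+2tq'(t)$ as soon as $|q(y_\ell)|\le\sqrt{t/y_\ell}\prod_{\mu>j}(y_\mu-t)/(y_\mu-y_\ell)$ for $\ell<j$.
\item Your $G$ is the right normalization for this pointwise condition, but you must also take $sq(ts^2)=S(s)T_{2j-1-d}(s)$. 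The polynomial $sq(ts^2)$ may have degree up to $2j-1$ and $S$ uses $d$ of it, so the Chebyshev factor fills the rest. This gives $1+2tq'(t)=S'(1)+(2j-1-d)^2$.
\item The envelope is then used a second time, by tangency at $s=1$. Since $S(1)=1$ equals the envelope there and $|S|$ stays below it, $S'(1)\ge\sum_{\ell>j}2t/(y_\ell-t)$. This cancels the terms $2t/(y_\ell-t)$ that the $\ell>j$ part of the first sum leaves on the right.
\end{enumerate}
Without this cancellation you would be left with $\sum_{\ell>j}(2t+m_\ell y_\ell)/(y_\ell-t)$ under the root instead of the stated expression.
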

\begin{proof}
For $d>2j-1$, the conclusion is immediate. We therefore assume $d\leq2j-1$. We first separate the contributions of the smaller squares $y_\ell<t$ from those of the larger squares $y_\ell>t$.

The terms involving the larger squares admit simple bounds. Indeed, $m_j\ge2$, and
\[
    \frac{y_0-y_\ell}{y_0-t}\le1\quad(\ell>j),\qquad \frac{r_0+1}{y_0-t}\le\frac{2}{1-t}.
\]
The second inequality follows from $y_0=r_0^2$ and
\[
    2(y_0-t)-(r_0+1)(1-t) =(r_0-1)(2r_0+1+t)\ge0.
\]
Consequently, after moving the terms with $\ell>j$ to the right in \eqref{eq:splitting_stability_5}, we obtain
\begin{equation}\label{eq:block_splitting}
    \begin{aligned}
        &1+2t\sum_{\ell<j}\frac{1}{t-y_\ell}+\frac{2}{m_j(y_0-t)}\sum_{\ell<j}\frac{m_\ell y_\ell(y_0-y_\ell)}{t-y_\ell}\\
        &\qquad\leq \sum_{\ell>j}\frac{2t+m_\ell y_\ell}{y_\ell-t}+\frac{Q}{(1-t)^2}.
    \end{aligned}
\end{equation}
All sums here range over residual indices $1,\ldots,k$.

The left-hand side of \eqref{eq:block_splitting} depends on the unknown spacing of the first $j$ squares. To obtain a bound in terms of the block size $j$, we will compare it with the endpoint derivative of a polynomial of degree at most $j-1$, using Lagrange interpolation at these squares.

Define $V_j(y):=\prod_{\ell\leq j}(y-y_\ell)$. For any polynomial $q$ of degree at most $j-1$ satisfying $q(t)=1$, \eqref{eq:lagrange_interpolation} and \eqref{eq:lagrange_derivatives} give
\[
    q'(t) = \sum_{\ell<j} \left(\frac{1}{t-y_\ell}+\frac{q(y_\ell)}{t-y_\ell} \frac{V_j'(t)}{V_j'(y_\ell)} \right).
\]
To compare this expression with the left-hand side of \eqref{eq:block_splitting}, it suffices to require
\begin{equation}\label{eq:q_condition}
    \left|q(y_\ell)\frac{V_j'(t)}{V_j'(y_\ell)}\right| \leq \frac{m_\ell y_\ell(y_0-y_\ell)}{m_jt(y_0-t)},\quad \ell=1,2,\dots,j-1.
\end{equation}
Using \eqref{eq:vandermonde_simplification} and the fact that $V'(y_\ell)=V_j'(y_\ell)\prod_{\mu>j}(y_\ell-y_\mu)$ holds for $\ell=1,2,\dots,j$, we can rewrite \eqref{eq:q_condition} as
\begin{equation}\label{eq:q_condition_2}
    \left|q(y_\ell)\right| \leq \sqrt{\frac{t}{y_\ell}}\prod_{\mu>j}\frac{y_\mu-t}{y_\mu-y_\ell},\quad \ell=1,2,\dots,j-1.
\end{equation}

Hence for every polynomial $q$ of degree at most $j-1$ satisfying $q(t)=1$ and \eqref{eq:q_condition_2}, we have
\begin{equation}\label{eq:block_splitting_2}
    1+2tq'(t)\leq \sum_{\ell>j}\frac{2t+m_\ell y_\ell}{y_\ell-t}+\frac{Q}{(1-t)^2}.
\end{equation}

We now construct $q$ with a large derivative at $t$. We use the polynomial factor $S$ to enforce the pointwise bounds \eqref{eq:q_condition_2}, and a Chebyshev polynomial to obtain a large endpoint derivative. We define $q$ by
\begin{equation}\label{eq:q_construction}
    s q(ts^2)=S(s)T_{2j-1-d}(s),
\end{equation}
By the degree and parity statements in Lemma~\ref{lem:chebyshev_polynomials}, the product $S(s)T_{2j-1-d}(s)$ is odd and has degree $2j-1$. Dividing it by $s$ therefore gives an even polynomial of degree $2j-2$, so \eqref{eq:q_construction} defines a polynomial $q$ of degree at most $j-1$.

Equations \eqref{eq:chebyshev_basic_properties} and \eqref{eq:P_envelope} give $q(t)=1$ and, on evaluating \eqref{eq:q_construction} at $s=\sqrt{y_\ell/t}$, the bound \eqref{eq:q_condition_2}. Thus $q$ satisfies all the hypotheses of \eqref{eq:block_splitting_2}. Finally, differentiating \eqref{eq:q_construction} at $s=1$ and applying the endpoint identities in \eqref{eq:chebyshev_basic_properties}, we obtain
\begin{equation}\label{eq:q_endpoint_derivative}
    1+2tq'(t)=S'(1)+(2j-1-d)^2.
\end{equation}
The envelope condition \eqref{eq:P_envelope} gives a lower bound on $S'(1)$. In fact, we have
\begin{equation}\label{eq:P_derivative_lower_bound}
    S'(1)\geq \left.\frac{d}{ds}\prod_{\ell>j}\frac{y_\ell-t}{y_\ell-ts^2}\right|_{s=1} = \sum_{\ell>j} \frac{2t}{y_\ell - t}.
\end{equation}

Therefore, combining \eqref{eq:block_splitting_2}, \eqref{eq:q_endpoint_derivative} and \eqref{eq:P_derivative_lower_bound}, we obtain
\begin{equation}\label{eq:Q_lower_bound_1}
    \sum_{\ell>j}\frac{m_\ell y_\ell}{y_\ell-t}+\frac{Q}{(1-t)^2} \geq (2j-1-d)^2.
\end{equation}
Taking square roots and using $d\leq2j-1$ gives \eqref{eq:Q_lower_bound_2}.
\end{proof}

We will now proceed to construct an explicit polynomial $S$ that satisfies the envelope condition \eqref{eq:P_envelope}, and estimate its degree. To this end, for all integers $\delta\geq3$, we define
\begin{equation}\label{eq:P_construction}
    S_\delta(s) = \frac{-T_\delta(s\cos(\pi/\delta))}{1+\lambda_\delta(1-s^2)},
\end{equation}
where 
\[
    \lambda_\delta = \frac{\cos^2(\pi/\delta)}{\cos^2(\pi/(2\delta))-\cos^2(\pi/\delta)}.
\]

The parameters $\lambda_\delta$ increase to infinity and satisfy a useful uniform bound. Indeed, putting $\theta=\pi/(2\delta)\in(0,\pi/6]$, we have
\begin{equation}\label{eq:lambda_delta_bound}
    \frac{1}{1+\lambda_\delta}
    =4\sin^2\theta-\tan^2\theta
    \leq3\sin^2\theta<3\theta^2
    =\frac{3\pi^2}{4\delta^2}.
\end{equation}
The function $4\sin^2\theta-\tan^2\theta$ has derivative
\[
    2\tan\theta\left(4\cos^2\theta-\sec^2\theta\right)>0
    \qquad(0<\theta\leq\pi/6).
\]
Thus $\lambda_\delta$ is strictly increasing in $\delta$, and \eqref{eq:lambda_delta_bound} implies that $\lambda_\delta\to\infty$.

$S_\delta(s)$ is actually a polynomial of degree $\delta-2$ because the roots of the denominator are $s=\pm \cos(\pi/(2\delta))/\cos(\pi/\delta)$, which are also roots of the numerator $T_\delta(s\cos(\pi/\delta))$. $S_\delta(1)=1$ follows from \eqref{eq:chebyshev_cosine}.

Now for every $\ell>j$, we define $\delta_{j,\ell}:=\min\{\delta\geq 3 : \lambda_\delta \geq \frac{t}{y_\ell-t}\}$. This is well-defined since $\lambda_\delta$ is an increasing function of $\delta$ and tends to infinity as $\delta\to\infty$. Therefore
\[
    |S_{\delta_{j,\ell}}(s)|\leq \frac{1}{1+\lambda_{\delta_{j,\ell}}(1-s^2)}\leq \frac{y_\ell-t}{y_\ell-ts^2}, \quad s\in[-1,1].
\]
We can now define the polynomial
\[
    S(s) = \prod_{\ell>j} S_{\delta_{j,\ell}}(s),
\]
which satisfies the envelope condition \eqref{eq:P_envelope}, has degree $\sum_{\ell>j} (\delta_{j,\ell}-2)=\sum_{\ell>j}\delta_{j,\ell} -2(k-j)$, and has parity matching its degree. Applying Lemma~\ref{lem:upper_polynomial_comparison} gives a concrete inequality, namely
\begin{equation}\label{eq:Q_lower_bound_3}
    2k-1 \leq \sum_{\ell>j} \delta_{j,\ell} + \sqrt{\sum_{\ell>j}\frac{m_\ell y_\ell}{y_\ell-t}+\frac{Q}{(1-t)^2}}
\end{equation}
for all $j=1,2,\dots,k-1$.

\subsection{Averaging the block estimates}
We cannot directly use \eqref{eq:Q_lower_bound_3} to obtain a simple bound on $Q$ because the quantity $\frac{m_\ell y_\ell}{y_\ell-t}$ cannot be easily controlled. Instead, we take a linear combination of the inequalities for different values of $j$ to average out these terms. 

Let $j^*:=\min\{1\leq j\leq k : y_j \geq 1-1/k\}$. For $j=1,2,\dots,j^*-1$, we multiply the inequality \eqref{eq:Q_lower_bound_3} by the weight $w_j:=y_{j+1}-y_j$ and sum over $j$ to obtain a combined inequality, namely
\begin{equation}\label{eq:Q_lower_bound_4}
    (2k-1)(y_{j^*}-y_1) \leq \sum_{j=1}^{j^*-1} w_j \sum_{\ell>j} \delta_{j,\ell} + \sum_{j=1}^{j^*-1} w_j \sqrt{\sum_{\ell>j}\frac{m_\ell y_\ell}{y_\ell-y_j}+\frac{Q}{(1-y_j)^2}}.
\end{equation}
We first deal with the first term on the right-hand side. By the definition of $\delta_{j,\ell}$ and the monotonicity of $\lambda_\delta$, we have
\[
    \delta_{j,\ell} =3+\sum_{\delta\geq3} \mathbf{1}_{\{\lambda_\delta < \frac{y_j}{y_\ell-y_j}\}}.
\]
Therefore, we can write the sum as
\begin{align*}
    &\phantom{=} \sum_{j=1}^{j^*-1} w_j \sum_{\ell>j} \delta_{j,\ell} \\
    &= 3\sum_{j=1}^{j^*-1} (k-j)w_j +\sum_{j=1}^{j^*-1} (y_{j+1}-y_j) \sum_{\ell>j}\sum_{\delta\geq3} \mathbf{1}_{\{\lambda_\delta < \frac{y_j}{y_\ell-y_j}\}}\\
    &\leq 3\sum_{j=1}^{k-1} (k-j)(y_{j+1}-y_j)+\sum_{\delta\geq3}\sum_{\ell=2}^{k} \sum_{j=1}^{\min(j^*,\ell)-1} (y_{j+1}-y_j)  \mathbf{1}_{\{y_\ell-y_j < y_\ell/(1+\lambda_\delta)\}}\\
    &\leq 3\sum_{j=1}^k y_j -3k y_1+\sum_{\delta\geq3}\sum_{\ell=2}^{k} \frac{y_\ell}{1+\lambda_\delta}\\
    &\leq -3ky_1+\left(\sum_{j=1}^k y_j\right)\left(3+\sum_{\delta\geq3} \frac{1}{1+\lambda_\delta}\right).
\end{align*}
The second inequality holds because for every fixed $\ell$ and $\delta$, all intervals $(y_j, y_{j+1})$ contributing to the sum lie in the interval $(y_\ell - y_\ell/(1+\lambda_\delta), y_\ell)$ and they are disjoint, so the sum of their lengths is at most $y_\ell/(1+\lambda_\delta)$.

Combining this inequality with \eqref{eq:Q_lower_bound_4}, $Q\geq y_0+2\sum_{j=1}^k y_j$ and $y_{j^*} \geq 1-1/k$ yields
\begin{equation}\label{eq:Q_lower_bound_5}
    \frac{(2k-1)(k-1)}{k}+(k+1)y_1 \leq \frac{C_*}{2}\left(Q-y_0\right) + \sum_{j=1}^{j^*-1} w_j \sqrt{\sum_{\ell>j}\frac{m_\ell y_\ell}{y_\ell-y_j}+\frac{Q}{(1-y_j)^2}},
\end{equation}
where we write $C_* := 3+\sum_{\delta\geq3} \frac{1}{1+\lambda_\delta}$ for simplicity.

We now proceed to bound the second term on the right-hand side of \eqref{eq:Q_lower_bound_5}. We assume $j^*\geq2$; otherwise the sum is empty and there is nothing to bound. Using $\sqrt{a+b}\leq\sqrt a+\sqrt b$, we obtain
\begin{equation}\label{eq:Q_lower_bound_5_sqrt}
    \sum_{j=1}^{j^*-1} w_j \sqrt{\sum_{\ell>j}\frac{m_\ell y_\ell}{y_\ell-y_j}+\frac{Q}{(1-y_j)^2}} \leq \sum_{j=1}^{j^*-1} w_j \sqrt{\sum_{\ell>j}\frac{m_\ell y_\ell}{y_\ell-y_j}} + \sum_{j=1}^{j^*-1} \frac{w_j \sqrt{Q}}{1-y_j}.
\end{equation}

Separating the last gap and comparing the remaining terms with an integral gives
\[
    \sum_{j=1}^{j^*-1} \frac{w_j \sqrt{Q}}{1-y_j} \leq \sqrt{Q}\left(\frac{y_{j^*}-y_{j^*-1}}{1-y_{j^*-1}} + \int_{y_1}^{y_{j^*-1}} \frac{dy}{1-y} \right) < \sqrt{Q} (1+\log k),
\]
where we used $y_{j^*} \leq 1$ and $y_{j^*-1}< 1-1/k$.

We use H\"older's inequality to bound the first term on the right-hand side of \eqref{eq:Q_lower_bound_5_sqrt}. For simplicity of notation, we write
\[
    H_j := \sum_{\ell>j}\frac{m_\ell y_\ell}{y_\ell-y_j}.
\]
Then, using $\sum_j w_j <1$, we obtain
\begin{align*}
    &\left(\sum_{j=1}^{j^*-1} w_j \sqrt{H_j}\right)^{3/2} \leq \sum_{j=1}^{j^*-1} w_j H_j^{3/4} 
    \leq \sum_{j=1}^{j^*-1} \sum_{\ell>j} w_j \left(\frac{m_\ell y_\ell}{y_\ell-y_j}\right)^{3/4}\\
    &= \sum_{\ell=2}^{k} (m_\ell y_\ell)^{3/4} \sum_{j=1}^{\min(j^*,\ell)-1} \frac{y_{j+1}-y_j}{(y_\ell-y_j)^{3/4}}\\
    &\leq \sum_{\ell=2}^{k} (m_\ell y_\ell)^{3/4} \sum_{j=1}^{\min(j^*,\ell)-1} \int_{y_j}^{y_{j+1}} \frac{dy}{(y_\ell-y)^{3/4}}\\
    &\leq \sum_{\ell=2}^{k} (m_\ell y_\ell)^{3/4} \int_{0}^{y_\ell} \frac{dy}{(y_\ell-y)^{3/4}}
    = 4 \sum_{\ell=2}^{k} m_\ell^{3/4} y_\ell < 4 \sum_{\ell=1}^{k} m_\ell y_\ell<4Q.
\end{align*}

Combining the two estimates with \eqref{eq:Q_lower_bound_5_sqrt} and \eqref{eq:Q_lower_bound_5}, and making some trivial simplifications, we obtain
\begin{equation}\label{eq:Q_lower_bound_6}
    2k-3 < \frac{C_*}{2}\left(Q-y_0\right) + (4Q)^{2/3} + \sqrt{Q} (1+\log k).
\end{equation}

In asymptotic terms, the inequality \eqref{eq:Q_lower_bound_6} simplifies to
\begin{equation}\label{eq:Q_lower_bound_final}
    Q \geq \left(\frac{4}{C_*}-o(1)\right) k.
\end{equation}
This holds because if $Q<2k$ then the error term is $o(k)$, while if $Q\geq 2k$ then the bound holds automatically.

\subsection{Completion of the upper bound}

\begin{proof}[Proof of Theorem~\ref{thm:moment_upper_bound}]
By the initial reductions, it suffices to bound $\Phi$ uniformly in $N$ for the maximizing multisets considered above.

Having established the lower bound on $Q$ in \eqref{eq:Q_lower_bound_final}, we now proceed to control the top entry $r_0$ to finally bound the objective $c=r_0(r_0-1)/Q$. To this end, we recall the moment constraints imply that for any polynomial $q\in\operatorname{span}\{x^3,x^5,\dots,x^{2k+1}\}$, we have
\[
    \sum_{j=0}^{k} m_j q(r_j)=0.
\]
We take $q(x)=x^2 T_{2k-1}(x)$, and use \eqref{eq:chebyshev_basic_properties} to obtain
\begin{equation}
    r_0^2 T_{2k-1}(r_0) = -\sum_{j=1}^{k} m_j r_j^2 T_{2k-1}(r_j) \leq \sum_{j=1}^{k} m_j r_j^2 < Q.
\end{equation}
We now write $W:=Q/r_0^2$ and $u:=\operatorname{arcosh} r_0$, to obtain
\begin{equation}
    \exp((2k-1) u)<2\cosh ((2k-1) u) \leq 2W,
\end{equation}
and therefore
\begin{equation}
    u < \frac{\log(2W)}{2k-1}.
\end{equation}
Consequently, the objective $c=r_0(r_0-1)/Q$ can be bounded by
\begin{equation}\label{eq:c_upper_bound_1}
    c = \frac{1-\operatorname{sech} u}{W}\leq \frac{u^2}{2W} \leq \frac{(\log(2W))^2}{2(2k-1)^2 W}.
\end{equation}
It remains to bound $W$ in terms of $Q$ and invoke the lower bound \eqref{eq:Q_lower_bound_final}. In fact, if $W<2k$ then
\[
    r_0 <\cosh\left(\frac{\log(4k)}{2k-1}\right)=1+o(1),
\]
so $W = Q/r_0^2 > \left(\frac{4}{C_*}-o(1)\right) k$. If $W\geq2k$ then the same bound holds trivially because $C_*>3$.

Since $(\log(2W))^2/W$ is decreasing for sufficiently large $W$ (specifically $W>e^2/2$), we substitute the lower bound on $W$ into \eqref{eq:c_upper_bound_1}, and recall the initial definition $k=(g-3)/2$ to obtain
\begin{equation}\label{eq:c_upper_bound_final}
    c < \left(\frac{C_*}{32}+o(1)\right)\frac{(\log k)^2}{k^3}=\left(\frac{C_*}{4}+o(1)\right)\frac{(\log g)^2}{g^3}.
\end{equation}
The constant is therefore $C_*/4$. By \eqref{eq:lambda_delta_bound}, we can bound $C_*$ by
\begin{align*}
    C_* & < 3 + \sum_{\delta=3}^{5} 1/(1+\lambda_\delta) +\sum_{\delta\geq6} \frac{3\pi^2}{4\delta^2} \\
    & = 3+\frac23+(\sqrt2-1)+\frac{5-\sqrt5}{10} +\frac{3\pi^2}{4}\left(\frac{\pi^2}{6}-1-\frac14-\frac19-\frac{1}{16}-\frac{1}{25}\right)\\&\approx5.69946<\frac{40}{7}.
\end{align*}
\end{proof}

\section{Lower Bounds for the Abstract Moment Problem}
\label{sec:lower_bounds}
In this section, we establish the asymptotic lower bound for the abstract moment problem by constructing finite real lists with positive integer multiplicities.

\begin{theorem}\label{thm:moment_lower_bound}
For every sufficiently large odd integer $g$, there exist distinct nonzero support locations $r_0,r_1,\ldots,r_s\in\R$, indexed so that $r_0$ is the largest, and corresponding multiplicities $m_0,m_1,\ldots,m_s\in\Z^+$ that satisfy the moment conditions
\[
    \sum_{j=0}^s m_j r_j^k = 0, \quad k=1,3,\dots,g-2,
\]
and 
\[
    \frac{r_0(r_0+r_{\min})}{\sum_{j=0}^s m_j r_j^2} \geq (1-o(1))\frac{(\log g)^2}{g^3}.
\]
Here $r_{\min}:=\min_{0\leq j\leq s}r_j$. The corresponding list contains $m_j$ copies of each $r_j$, so its length is $N=\sum_{j=0}^s m_j$, while $s$ counts the residual support locations. Consequently, $\gamma'_g\geq(1-o(1))(\log g)^2/g^3$ as $g\to\infty$ through odd integers.
\end{theorem}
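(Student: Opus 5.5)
The plan is to build the list from a continuous signed measure on $[-1,1]$ together with a single top atom, and only discretize at the end. Put $k=(g-3)/2$, fix a top location $r_0=\cosh u$ with multiplicity $m_0=1$, and look for a signed density $f$ on $[-1,1]$ with
\[
    \int_{-1}^1 y^{2}q(y)f(y)\,dy=-r_0^{2}q(r_0)
\]
for every odd polynomial $q$ of degree at most $g-4$. By positive scaling and the reduction at the start of Section~\ref{sec:moment_problem}, the first moment can be ignored. Since the residual mass lives in $[-1,1]$, we get $r_{\min}\geq-1$, so the objective is at least $r_0(r_0-1)/Q$ with $Q=r_0^2+\int y^2|f|$. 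Signs cost nothing: a negative atom of weight $w$ at $y$ can be replaced by a positive atom of weight $|w|$ at $-y$, which preserves all odd moments and the second moment. So the whole task is to make $\int y^2|f|\,dy$ small relative to $r_0^2$.

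For the density I would use the Legendre kernel of Section~\ref{sec:legendre_preliminaries}. Let $K^{\mathrm{odd}}_D(x,y)=\sum_{\ell\ \mathrm{odd}\leq D}\frac{2\ell+1}{2}P_\ell(x)P_\ell(y)$ with $D=g-4$, and set
\[
    y^2f(y)=-r_0^2K^{\mathrm{odd}}_D(r_0,y).
\]
By the reproducing property \eqref{eq:legendre_reproducing} and parity \eqref{eq:legendre_parity}, this gives the required identities. The singularity of $f$ at $0$ is only of order $1/|y|$, so $y^2|f|$ is integrable. To recover the sharp constant $1$, I expect to replace the sharp truncation by a damped kernel $\sum_\ell \rho_\ell\frac{2\ell+1}{2}P_\ell(r_0)P_\ell(y)$. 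It still reproduces odd polynomials of degree at most $g-4$, provided the extra terms lie above that degree (for instance, de la Vallée-Poussin-type weights supported on degrees up to $(1+\epsilon)g$). This is permissible because constraints of higher degree are not imposed.

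The key estimate is
\[
    \int_{-1}^1 |K(r_0,y)|\,dy\lesssim \mathrm{poly}(g)\,e^{gu},
\]
obtained from \eqref{eq:legendre_bound_2} and Cauchy--Schwarz with \eqref{eq:legendre_orthogonality}. I would then take $u=(1+o(1))\log g/g$ and optimize the lower-order factor, so that $Q=(1+o(1))\,g\cdot(\text{const})$ and $u^2/(2Q)$ matches $(\log g)^2/g^3$. The integral relation \eqref{eq:legendre_integral} should let me integrate $K$ by parts to sharpen the $L^1$ bound.

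Discretization comes next. Replace $f\,dy$ on $[\eta,1]$ and $[-1,-\eta]$ by a high-order quadrature rule. This gives finitely many atoms whose low moments agree exactly with $f$ for the relevant monomials $y^{2\ell+1}$, and the tiny window near $0$ contributes $O(\eta)$ to $Q$. The remaining defects are fixed by adjusting $k$ atoms at distinct squares via the invertible Vandermonde system used in Lemma~\ref{lem:upper_local_optimality}. To obtain rational, and after scaling integer, multiplicities, I would perturb $r_0$ and the atom locations slightly, solve the exact linear system for the weights at rational data, and clear denominators. Continuity ensures the objective changes by $o$ of the main term.

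The main obstacle is the sharp $L^1$ estimate of the (damped) odd kernel at the exterior point $r_0=\cosh u$ with $u\asymp \log g/g$. A crude Cauchy--Schwarz bound loses polynomial factors in $g$, and these translate into the constant in front of $(\log g)^2/g^3$. I would first try a weighted Cauchy--Schwarz, with weight $|y|$ to absorb the singularity. If the constant still comes out wrong, I would choose the damping weights $\rho_\ell$ to minimize $\sum \rho_\ell^2(2\ell+1)P_\ell(r_0)^2$, i.e.\ an $L^2$-optimal dual, which should give $\int y^2|f|\approx g\,e^{2gu}/\mathrm{poly}$ at exactly the right scale.
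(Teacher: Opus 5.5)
There is a genuine gap in the discretization step. Nothing in your construction enforces integrality of the multiplicities, and integrality is exactly what produces the order $(\log g)^2/g^3$.

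The objective is invariant under positive scaling of the \emph{locations}, but not of the \emph{multiplicities}. Replacing every $m_j$ by $Mm_j$ leaves $r_0(r_0+r_{\min})$ unchanged and multiplies $\sum_j m_j r_j^2$ by $M$. So ``solve at rational data and clear denominators'' divides $\Phi$ by a common denominator you do not control.

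No perturbation argument rescues this. With real weights allowed, the bound $r_0^2T_{2k-1}(r_0)\le\sum_{j\geq1}m_jr_j^2$ from the proof of Theorem~\ref{thm:moment_upper_bound} is attained. By duality and the extremal property of $T_{2k-1}$, equality holds for suitable real weights at its extremal points. Then $r_0=\cosh u$ with $u\asymp 1/g$ gives $Q=O(1)$ and $\Phi\asymp g^{-2}$. Your discretize-and-correct procedure never uses the value of $u$. If it preserved $\Phi$ up to a factor $1-o(1)$, applying it to this measure would contradict Theorem~\ref{thm:moment_upper_bound}.

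Consequently, your choices $u\approx\log g/g$ and $Q\asymp g$ are read off from the known answer, not forced by the argument. No sharpening of the $L^1$ kernel estimate can close the gap, since that estimate only concerns the relaxed problem. Concretely, at $u\approx\log g/g$ the extremal relaxed measure has weights of order $(\log g)/g$ at the nodes in the bulk of $[-1,1]$. Converting these to positive integer multiplicities with $m_0=1$ is a wholesale restructuring, not a perturbation. Your Vandermonde correction then only reintroduces non-integral weights.

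The missing idea is a construction in which integrality is automatic. The paper reads the list off from the jumps of an integer-valued step function:
\begin{itemize}
\item It minimizes $J(p)=\int_0^1B(t)|p(t^2)|\,dt+\int_1^a p(t^2)\,dt$ over monic $p$ of degree $n=(g-1)/2$, where $B$ takes only the integer values $1$ and $H$.
\item The first variation shows that $C=B\,\sign(p(t^2))$ on $(0,1)$, extended by $C=1$ on $(1,a)$ and $C=0$ beyond $a$, satisfies $\int_0^\infty C(t)q(t^2)\,dt=0$ whenever $\deg q<n$.
\item Integration by parts turns this into the odd moment identities, with multiplicities $m_j=|\Delta_j|\in\Z^+$ and $m_0=1$.
\item The second moment is squeezed between $2\sum_j u_j^2$ and $a^2+2\sum_j u_j^2+2(H-1)(N_b+1)$. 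The root-sum lemma and the root count in $[b,1)$ then give $Q=(1+o(1))n$.
\end{itemize}
The Legendre kernel is used to bound $\lambda_{a,n}(B)$ and sup norms, not as a density for the list. What pins down $u=\tau/(2n)\approx\log g/g$ is the coercivity requirement $\lambda_{a,n}(B)<1$, which needs $e^{\tau}\ll H$, together with the requirement $(H-1)(N_b+1)=o(n)$. This is the integrality trade-off made explicit, and your plan has no counterpart to it.
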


The main difficulty is to satisfy the moment conditions while keeping the multiplicities integral and the second moment small. We obtain the list from the jumps of an integer-valued step function: the absolute jump sizes give the multiplicities, while the jump directions determine the signs of the corresponding entries. Integration by parts then converts the required odd moment conditions into integral identities for the step function.

To obtain these identities, we minimize a weighted integral of the absolute value of a monic polynomial, together with a linear term that accounts for the desired largest entry. The first variation supplies the integral identities. Using the Legendre polynomial identities from Section~\ref{sec:legendre_preliminaries}, we first control this linear term; we then carry out the construction and estimate its second moment.

\subsection{Auxiliary parameters}
We need the linear term associated with the largest entry to be smaller in absolute value than the weighted integral on $[0,1]$. Increasing the weight near $1$ will provide this control; restricting the increase to a short interval will allow us to keep the second moment small.

Put $n=(g-1)/2$, and for $\tau>0$ set $a=\cosh(\tau/(2n))$, the desired largest entry. For $b\in(0,1)$ and an integer $H\geq2$, define the weight function
\[
    B(t):=\begin{cases}
    H, & t\in[b,1], \\
    1, & \text{otherwise}.
    \end{cases}
\]
For a polynomial $p(x)$, define the norm and the functional
\[
    \|p\|_B := \int_0^1 B(t) |p(t^2)| \, dt, \qquad \ell_a(p) := \int^a_1 p(t^2) \, dt,
\]
and the operator norm
\[
    \lambda_{a,n}(B) := \sup_{\deg p\leq n, p\neq 0} \frac{|\ell_a(p)|}{\|p\|_B}.
\]
We will choose the parameters so that $\lambda_{a,n}(B)<1$. This condition ensures that the linear term is dominated by the norm and makes the minimization problem in the next subsection coercive.

To estimate this operator norm, we use an even polynomial that is small on $[0,b]$, bounded by $1$ on $[b,1]$, and at least $1$ on $[1,a]$. Its smallness controls the part where $B=1$, while the larger weight supplies a factor $H^{-1}$ on $[b,1]$. For an integer $d\geq0$, define
\[
    h(t):=\frac{T_d(t/b)^2}{T_d(1/b)^2}.
\]
By the parity and monotonicity in Lemma~\ref{lem:chebyshev_polynomials} and the bound \eqref{eq:chebyshev_basic_properties}, the polynomial $h$ is even and nonnegative and satisfies
\begin{equation}\label{eq:chebyshev_bounds}
    \begin{aligned}
    0 \leq h(t) \leq 1, &\quad |t|\leq 1, \\
    h(t) \leq \frac{1}{T_d(1/b)^2}, &\quad |t|\leq b, \\
    h(t) \geq 1, &\quad |t|\geq 1.
    \end{aligned}
\end{equation}
The values of $\tau$, $b$, $H$, and $d$ will be specified in the final subsection.

For $\deg p\leq n$, the polynomial $p(t^2)h(t)$ has degree at most $2(n+d)$. Combining \eqref{eq:legendre_kernel} and \eqref{eq:legendre_reproducing}, we therefore have for all $t\in[1,a]$,
\[
    p(t^2)h(t) = \int_0^1 \left(K_{2(n+d)}(t,y)+K_{2(n+d)}(t,-y)\right) p(y^2) h(y) \, dy,
\]
and consequently,
\begin{align*}
    \ell_a(p) &= \int_1^a \frac{p(t^2)h(t)}{h(t)} \, dt \\
    &= \int^1_0 \left(\int^a_1 \frac{K_{2(n+d)}(t,y)+K_{2(n+d)}(t,-y)}{h(t)} \, dt\right) p(y^2) h(y) \, dy \\
    &= \int^1_0 D_{a,n+d}(y)p(y^2) \,dy,
\end{align*}
where
\[
    D_{a,n+d}(y) := h(y)\int^a_1 \frac{K_{2(n+d)}(t,y)+K_{2(n+d)}(t,-y)}{h(t)} \, dt.
\]
We can bound the kernel $D_{a,n+d}(y)$ for $y\in[0,1]$ by utilizing \eqref{eq:legendre_bound_1}, \eqref{eq:legendre_bound_2}, \eqref{eq:legendre_parity}, \eqref{eq:legendre_integral} and \eqref{eq:chebyshev_bounds}. Indeed, we have
\begin{align*}
    |D_{a,n+d}(y)| &\leq h(y) \sum_{j=0}^{n+d} |P_{2j}(y)| \int^a_1 \frac{(4j+1) P_{2j}(t)}{h(t)}  \, dt \\
    &\leq h(y) \sum_{j=0}^{n+d} \int^a_1 (4j+1) P_{2j}(t) \, dt \\
    &= h(y) \sum_{j=0}^{n+d} \left(P_{2j+1}(a)-P_{2j-1}(a)\right) \\
    &= h(y) \left(P_{2(n+d)+1}(a)-1\right).
\end{align*}
Along with the bound \eqref{eq:chebyshev_bounds} for $h(y)$, we obtain
\begin{equation}\label{eq:lambda_upper_bound}
    \lambda_{a,n}(B) \leq \left(P_{2(n+d)+1}(a)-1\right) \max\left(\frac{1}{H}, \frac{1}{T_d(1/b)^2}\right).
\end{equation}

\subsection{A minimization problem}
Assume that $\lambda_{a,n}(B)<1$. Among all monic real polynomials of degree $n$, consider the minimization problem
\[
    \min_p J(p), \qquad J(p):=\|p\|_B+\ell_a(p).
\]
By the definition of $\lambda_{a,n}(B)$,
\begin{equation}\label{eq:J_bounds}
    \left(1-\lambda_{a,n}(B)\right)\|p\|_B \leq J(p)\leq \left(1+\lambda_{a,n}(B)\right)\|p\|_B.
\end{equation}
Since all norms on a finite-dimensional vector space are equivalent, $J$ is coercive on the affine space of monic degree-$n$ polynomials. Thus, by continuity, a minimizer exists.

Fix a minimizer $p$. For every real polynomial $q$ of degree less than $n$, the polynomial $p+\varepsilon q$ remains monic of degree $n$, and hence
\begin{equation}\label{eq:minimizer_first_variation}
    0=\left.\frac{d}{d\varepsilon}J(p+\varepsilon q)\right|_{\varepsilon=0}
    =\int_0^1 B(t)\sign(p(t^2))q(t^2)\,dt+\ell_a(q).
\end{equation}
Differentiation under the integral follows by dominated convergence: $p(t^2)$ has only finitely many zeros, and the absolute-value difference quotient is bounded by $|q(t^2)|$.

We claim that all $n$ zeros of $p$ lie in $(0,1)$ and are simple. Indeed, let $y_1<\cdots<y_k$ be the distinct zeros of $p$ in $(0,1)$ having odd multiplicity. If $k<n$, then we can choose $\sigma\in\{-1,1\}$ so that the nonzero polynomial
\[
    q(x):=\sigma\prod_{i=1}^k(x-y_i)
\]
has the same sign as $p$ almost everywhere on $(0,1)$. Equation \eqref{eq:minimizer_first_variation} then gives
\[
    0=J(q) \geq \left(1-\lambda_{a,n}(B)\right)\|q\|_B>0,
\]
a contradiction. Therefore, all $n$ zeros of $p$ must lie in $(0,1)$ and be simple.

We then define the integer-valued step function
\[
    C(t):=\begin{cases}
        B(t)\sign(p(t^2)), & 0<t<1,\\
        1, & 1<t<a,\\
        0, & t>a.
    \end{cases}
\]
\eqref{eq:minimizer_first_variation} implies that
\begin{equation}\label{eq:step_function_moments}
    \int_0^\infty C(t)q(t^2)\,dt=0
\end{equation}
for all real polynomials $q$ of degree less than $n$.

Now the distributional derivative of the step function $C(t)$ on $(0,+\infty)$ is a linear combination of Dirac delta functions at the jumps. We write 
\[
    C'(t) = \sum_{j=0}^s \Delta_j \delta(t-t_j),
\]
where $t_0=a$ and $t_1,\ldots,t_s$ are the distinct remaining jump locations, and $\Delta_j$ is the signed jump size at $t_j$. These locations consist of the positive roots of the even polynomial $t\mapsto p(t^2)$ and the points $b,1,a$, with coincident locations counted only once. We define the support locations and their multiplicities by
\[
    r_j:=-\sign(\Delta_j)t_j,\qquad m_j:=|\Delta_j|,\qquad j=0,\ldots,s.
\]
They are nonzero and distinct because the $t_j$ are positive and distinct. The jump at $a$ has size $-1$, so $r_0=a$ and $m_0=1$.

For $\ell=0,\ldots,n-1$, taking $q(x)=x^\ell$ in \eqref{eq:step_function_moments} and integrating by parts gives
\[
    \sum_{j=0}^s m_j r_j^{2\ell+1} =-\sum_{j=0}^s \Delta_j t_j^{2\ell+1} =(2\ell+1)\int_0^\infty C(t)t^{2\ell}\,dt =0.
\]

Since all other jump locations lie in $(0,1]$ and the first moment vanishes, we have $r_0=a$ and $-1\leq r_{\min}<0$. Therefore the numerator of the expression
\[
\frac{r_0(r_0+r_{\min})}{\sum_{j=0}^s m_j r_j^2}
\]
can be bounded below by $a(a-1)$. The next two subsections will be dedicated to bounding the denominator $\sum_{j=0}^s m_j r_j^2$ from above.

\subsection{Bounding the second moment}
Write
\[
    Q:=\sum_{j=0}^s m_j r_j^2,
\]
and let $0<u_1<\cdots<u_n<1$ be the positive roots of the even polynomial $t\mapsto p(t^2)$. Thus $u_1^2,\ldots,u_n^2$ are the roots of $p$. We estimate $Q$ by considering the absolute sizes of the jumps of $C$.

The jumps at $1$ and $a$ have absolute sizes $H-1$ and $1$, respectively, and therefore contribute $(H-1)+a^2$ to $Q$. At each positive root $u_j\neq b$, the jump has absolute size $2B(u_j)$. At $b$, the jump has absolute size $H-1$ if $p(b^2)\neq0$, and $H+1=(H-1)+2$ if $p(b^2)=0$. Consequently, in both cases,
\begin{equation}\label{eq:Q_formula}
    Q=a^2+(H-1)(1+b^2)+2\sum_{j=1}^n u_j^2+2(H-1)\sum_{u_j>b}u_j^2.
\end{equation}

Let $N_b:=|\{j:u_j\geq b\}|$ be the number of these positive roots in $[b,1)$. Since $b<1$ and $u_j<1$
for every $j$, we obtain
\begin{equation}\label{eq:Q_bounds}
    2\sum_{j=1}^n u_j^2 \leq Q \leq a^2+2\sum_{j=1}^n u_j^2+2(H-1)(N_b+1).
\end{equation}
It remains to control the sum $\sum_{j=1}^n u_j^2$ of the squares of these positive roots and their number $N_b$ in the interval $[b,1)$. We will bound the sum of squares in this subsection and $N_b$ in the next subsection.

We start from a general bound on the root sum of a polynomial with roots in $(0,1)$ in terms of its sup-norm on the interval $[0,1]$. 

\begin{lemma}
    Let $q(x):=(x-q_1)\cdots(x-q_n)$ be a polynomial with roots $0<q_1<\cdots<q_n<1$. Then we have the inequality
\[
    \left|2\sum_{j=1}^n q_j-n\right| \leq \log M
\]
where $M:=4^n \sup_{x\in[0,1]} |q(x)|$.
\end{lemma}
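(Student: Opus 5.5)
The plan is to move the problem to $[-1,1]$ and compare $\log|q|$ against two explicit probability measures, both built from the arcsine (Chebyshev) measure.

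\emph{Step 1: change of variables.} Substitute $x=(1+y)/2$ and put $y_j:=2q_j-1\in(-1,1)$. Then
\[
    q\!\left(\tfrac{1+y}{2}\right)=2^{-n}P(y),\qquad P(y):=\prod_{j=1}^n(y-y_j).
\]
Under this substitution the two sides of the claim become
\[
    \sum_j y_j=2\sum_j q_j-n,\qquad M=2^n\sup_{|y|\le1}|P(y)|.
\]
So it suffices to prove $\left|\sum_j y_j\right|\le \log\bigl(2^n\|P\|_\infty\bigr)$ for a monic polynomial $P$ whose roots lie in $(-1,1)$.

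\emph{Step 2: the logarithmic expansion.} Write $y=\cos\theta$ and $y_j=\cos\varphi_j$ with $\theta,\varphi_j\in[0,\pi]$. I will use the classical expansion
\[
    \log|\cos\theta-\cos\varphi|=-\log2-\sum_{k\ge1}\frac{2}{k}\cos(k\theta)\cos(k\varphi).
\]
It follows from $|\cos\theta-\cos\varphi|=2|\sin\frac{\theta+\varphi}{2}\sin\frac{\theta-\varphi}{2}|$ together with $\log|2\sin(u/2)|=-\sum_{k\ge1}\cos(ku)/k$. The series converges in $L^1(d\theta)$, which is enough to integrate it term by term against bounded densities.

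\emph{Step 3: two test measures.} Consider the probability measures $d\nu_\pm=\frac1\pi(1\pm\cos\theta)\,d\theta$ on $[0,\pi]$. Both are nonnegative. Against either one, only the $k=1$ term of the expansion survives, because $\frac1\pi\int_0^\pi\cos\theta\cos(k\theta)\,d\theta=\frac12\delta_{k1}$. This gives
\[
    \int\log|\cos\theta-y_j|\,d\nu_\pm=-\log2\mp y_j.
\]
Summing over $j$ yields
\[
    \int\log|P(\cos\theta)|\,d\nu_\pm=-n\log2\mp\sum_j y_j.
\]
Since $\nu_\pm$ are probability measures, the left-hand side is at most $\log\|P\|_\infty$. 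Rearranging gives $\mp\sum_j y_j\le\log(2^n\|P\|_\infty)=\log M$, and the two choices of sign together give the claimed inequality.

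\emph{Expected obstacle.} The only delicate point is justifying the termwise integration of the logarithmic series, since it has integrable singularities at $\theta=\varphi_j$. I would handle this either through $L^2$ convergence of the Fourier series of $\log|2\sin(u/2)|$, which lies in $L^2$, or by computing $\int_0^\pi\log|\cos\theta-\cos\varphi|\cos\theta\,d\theta=-\pi\cos\varphi$ directly. One way to do the direct computation is to differentiate the known identity $\int_0^\pi\log|\cos\theta-y|\,d\theta=-\pi\log2$, valid for $|y|\le1$. Everything else is routine.
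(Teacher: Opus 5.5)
Your proof is correct and is essentially the paper's argument. The paper's substitution $x=(z+z^{-1}+2)/4$ with $z=e^{i\theta}$ is exactly your $x=(1+\cos\theta)/2$, and its step $|c_1|\le c_0$ for the nonnegative function $\log M-\log|\tilde q(e^{i\theta})|$ is precisely your integration against the two probability densities $\frac{1}{\pi}(1\pm\cos\theta)$; your remark about justifying termwise integration via $L^2$ convergence is, if anything, more careful than the paper, which simply cites the standard Fourier expansion.
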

\begin{proof}
    Consider the monic polynomial 
    \[
    \tilde{q}(z):=(4z)^n q((z+z^{-1}+2)/4)
    \]
    with all the roots on the unit circle. The supremum of $|\tilde{q}(z)|$ on the unit circle is exactly $M$.

    Now we consider the Fourier expansion of the nonnegative function $V(\theta):= \log M-\log |\tilde{q}(e^{i\theta})|$ on the unit circle. We write 
    \[
    V(\theta) = \sum_{k\in\Z} c_k e^{ik\theta},
    \]
    and calculate that $c_0 = \log M$ and $c_1 = 2\sum_{j=1}^n q_j - n$. These coefficients follow from the standard Fourier expansion \[\log(2|\sin(\theta/2)|)=-\sum_{m=1}^{\infty}\frac{\cos(m\theta)}{m},\] applied to the conjugate root pairs $e^{\pm i\phi_j}$ of $\tilde q$, where $\cos\phi_j=2q_j-1$. The nonnegativity of the function then gives $|c_1| \leq c_0$, which implies the desired inequality.
\end{proof}

We now apply this lemma to the polynomial $p(x)$ with roots $u_1^2,\ldots,u_n^2$. The sup-norm of $p$ can be bounded by comparing $p$ to a reference Chebyshev polynomial on the interval $[0,1]$. Indeed, let $p^*(x):=2^{1-2n}T_n(2x-1)$. By Lemma~\ref{lem:chebyshev_polynomials}, this polynomial is monic of degree $n$ and has sup-norm $2^{1-2n}$ on $[0,1]$. Since $p$ minimizes the functional $J$, we have $J(p)\leq J(p^*)$, and consequently \eqref{eq:J_bounds} implies
\[
    \|p\|_{B}  \leq \frac{1+\lambda_{a,n}(B)}{1-\lambda_{a,n}(B)}\|p^*\|_{B}.
\]
To convert this norm bound into sup-norm bounds, we first notice that
\[
    \|p^*\|_{B}\leq 2^{1-2n}\int^1_0 B(x)\,dx = 2^{1-2n} (b+H(1-b)).
\]
Also, we reproduce the even polynomial $p(t^2)$ using \eqref{eq:legendre_reproducing}:
\begin{align*}
    |p(t^2)| &= \left|\int_0^1 p(x^2) \big(K_{2n}(t,x) + K_{2n}(t,-x)\big) \, dx\right| \\
    &\leq \left(\int_0^1 |p(x^2)|\, dx\right) \sup_{0\leq x\leq 1}\big|K_{2n}(t,x) + K_{2n}(t,-x)\big|\\
    &\leq \|p\|_{B} \sup_{0\leq x\leq 1}\sum_{k=0}^{n}(4k+1)|P_{2k}(t)P_{2k}(x)|\\
    &\leq \|p\|_{B} \sup_{0\leq x\leq 1}\sum_{k=0}^{n}(4k+1)\\
    &=(n+1)(2n+1) \|p\|_{B}, \qquad t\in[0,1].
\end{align*}
Combining this with the bound on $\|p\|_{B}$ from above, we obtain the desired sup-norm estimate for $p$.
\[
    \sup_{t\in[0,1]} |p(t)| \leq (n+1)(2n+1) \frac{1+\lambda_{a,n}(B)}{1-\lambda_{a,n}(B)} 2^{1-2n} (b+H(1-b)).
\]

Applying the root-sum lemma now gives
\begin{equation}\label{eq:root_sum_bound}
\sum_{j=1}^n u_j^2\in[(n-\log M)/2, (n+\log M)/2],
\end{equation}
where
\begin{equation}\label{eq:M_definition}
    M := 2(n+1)(2n+1) \frac{1+\lambda_{a,n}(B)}{1-\lambda_{a,n}(B)} (b+H(1-b)).
\end{equation}

\subsection{Counting roots in small intervals}
It remains to estimate $N_b$, the number of positive roots of $t\mapsto p(t^2)$ in $[b,1)$, or equivalently the number of roots of $p$ in $[b^2,1)$. To this end, we derive a ``mass balance'' identity for the polynomial $p$ divided by one of its factors, and show that too many roots in a small interval would violate this balance.

For any positive root $u$ of $t\mapsto p(t^2)$, we choose $q(x) := \frac{p(x)}{x-u^2}$ in \eqref{eq:minimizer_first_variation} to get
\begin{align*}
    0 &=\int_0^a B(t)\sign(p(t^2))q(t^2)\,dt \\
    &= \int_0^a B(t)\sign(t-u)|q(t^2)|\,dt.
\end{align*}
Equivalently, we have
\begin{equation}\label{eq:mass_balance}
    \int_0^{u} B(t)|q(t^2)|\,dt = \int_{u}^{a} B(t)|q(t^2)|\,dt.
\end{equation}

Now assume $N_b\geq2$, and take $u^2$ to be the smallest root of $p(x)$ in the interval $[b^2,1)$. Then $u\in[b,1)$ and the mass balance identity \eqref{eq:mass_balance} implies that
\begin{align*}
    \int_0^{a} |q(t^2)|\,dt &\leq \int_0^{a} B(t)|q(t^2)|\,dt \\
    &= 2 \int_{u}^{a} B(t)|q(t^2)|\,dt \\
    &\leq 2H(a-b) \sup_{t\in[b,a]} |q(t^2)|.
\end{align*}
On the other hand, reproducing the even polynomial $t\mapsto q(a^2t^2)$ on the interval $[-1,1]$ gives the bound
\[
    \sup_{t\in[-a,a]} |q(t^2)| \leq \sum_{j=0}^{n-1} (4j+1) \int_0^1 |q(a^2t^2)|\,dt = \frac{n(2n-1)}{a} \int_0^a |q(t^2)|\,dt.
\]
Combining the two bounds, we obtain the lower bound
\begin{equation}\label{eq:small_interval_sup_ratio}
 \frac{\sup_{t\in[b^2,a^2]} |q(t)|}{\sup_{t\in[0,a^2]} |q(t)|} \geq \frac{a}{(a-b)2n(2n-1)H}.
\end{equation}

We now proceed to derive an upper bound for the same norm ratio in terms of the number of roots of $p(x)$ in the interval $[b^2,1)$.
\begin{lemma}
    Let $f(x)$ be a polynomial of degree $D$ with $r\in[1,D]$ roots in the interval $[\beta,\alpha]$ where $0<\beta<\alpha$. Then we have the bound
    \[
    \frac{\sup_{x\in[\beta,\alpha]}|f(x)|}{\sup_{x\in[0,\alpha]}|f(x)|} \leq \left(\frac{e^2D^2(1-\beta/\alpha)}{r^2}\right)^r.
    \]
\end{lemma}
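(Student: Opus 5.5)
The plan is to factor out the $r$ prescribed roots using the Hermite--Genocchi form of divided differences, and then control the resulting $r$-th derivative by V.~A.~Markov's inequality on $[0,\alpha]$. Write $M:=\sup_{x\in[0,\alpha]}|f(x)|$ and $L:=\alpha-\beta$, and let $z_1,\ldots,z_r$ be the $r$ roots of $f$ in $[\beta,\alpha]$, counted with multiplicity.

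\emph{Step 1 (factoring out the roots).} Fix $x\in[\beta,\alpha]$. Newton's interpolation formula at the nodes $z_1,\ldots,z_r,x$ gives
\[
    f(x)=f[z_1,\ldots,z_r,x]\prod_{i=1}^r(x-z_i),
\]
because the lower-order Newton terms involve divided differences of $f$ at the roots only, and these vanish. This remains valid for repeated nodes, interpreted confluently. By the mean value theorem for divided differences, $f[z_1,\ldots,z_r,x]=f^{(r)}(\xi)/r!$ for some $\xi\in[\beta,\alpha]$. Each factor satisfies $|x-z_i|\le L$, so
\[
    \sup_{[\beta,\alpha]}|f|\le\frac{L^r}{r!}\sup_{[0,\alpha]}|f^{(r)}|.
\]

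\emph{Step 2 (Markov's inequality).} Apply the V.~A.~Markov inequality for higher derivatives, rescaled from $[-1,1]$ to $[0,\alpha]$, where each derivative contributes a factor $2/\alpha$. This gives
\[
    \sup_{[0,\alpha]}|f^{(r)}|\le\left(\frac{2}{\alpha}\right)^rT_D^{(r)}(1)\,M.
\]
The extremal value is
\[
    T_D^{(r)}(1)=\prod_{k=0}^{r-1}\frac{D^2-k^2}{2k+1}\le\frac{D^{2r}}{(2r-1)!!}.
\]
This is the one external ingredient. It is classical but is not among the paper's preliminaries, so I would cite it, for instance from \cite{Szego1975} or a standard reference on polynomial inequalities. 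If a self-contained route is preferred, one could instead iterate the first-derivative Markov bound $D^2$ at each order, but this loses the $(2r-1)!!$ and hence the constant. I therefore expect the higher-order Markov step to be the main point to justify.

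\emph{Step 3 (arithmetic).} Combining Steps 1 and 2, and using $r!\,(2r-1)!!=(2r)!/2^r$, we get
\[
    \frac{\sup_{[\beta,\alpha]}|f|}{M}\le\frac{(2L/\alpha)^rD^{2r}}{r!\,(2r-1)!!}=\frac{4^rD^{2r}(L/\alpha)^r}{(2r)!}.
\]
Finally, $(2r)!\ge(2r/e)^{2r}$, which follows from $e^{y}\ge y^{m}/m!$ at $y=m=2r$. Substituting this bound gives
\[
    \frac{\sup_{[\beta,\alpha]}|f|}{M}\le\left(\frac{e^2D^2(1-\beta/\alpha)}{r^2}\right)^r,
\]
which is the stated bound.
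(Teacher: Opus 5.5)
Your proposal is correct and follows essentially the same route as the paper. Both use the interpolation-remainder (divided-difference) factorization $\sup_{[\beta,\alpha]}|f|\le \frac{(\alpha-\beta)^r}{r!}\sup_{[0,\alpha]}|f^{(r)}|$, the higher-order V.~A.~Markov inequality rescaled to $[0,\alpha]$, and the same closing arithmetic $r!\,(2r-1)!!=(2r)!/2^r$ with $(2r)!\ge(2r/e)^{2r}$; the only cosmetic difference is that you record the exact value $T_D^{(r)}(1)$ before bounding it by $D^{2r}/(2r-1)!!$, whereas the paper quotes that bound directly.
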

\begin{proof}
    We compare both norms against the maximum of $|f^{(r)}(x)|$ on the interval $[0,\alpha]$. 

    On one hand, for any $x\in[\beta,\alpha]$, we have
    \[
    |f(x)| = \left|\frac{f^{(r)}(\xi)}{r!} \prod_{i=1}^r (x-x_i)\right| \leq \frac{(\alpha-\beta)^r}{r!}\sup_{x\in[0,\alpha]} |f^{(r)}(x)| ,
    \]
    where $x_1,\dots,x_r$ are the roots of $f$ in $[\beta,\alpha]$ and the existence of $\xi\in[\beta,\alpha]$ is guaranteed by the interpolation remainder formula.

    On the other hand, the classical Markov brothers' inequality (see, e.g., \cite{rivlin1990chebyshev}) implies that 
    \[
    \sup_{x\in[0,\alpha]} |f^{(r)}(x)| \leq \frac{D^{2r}}{(\alpha/2)^r(2r-1)!!} \sup_{x\in[0,\alpha]} |f(x)|.
    \]
    Combining this with the previous bound gives
    \[
    \frac{\sup_{x\in[\beta,\alpha]}|f(x)|}{\sup_{x\in[0,\alpha]}|f(x)|} \leq \frac{1}{(2r)!} \left(4D^{2}(1-\beta/\alpha)\right)^r \leq \left(\frac{e^2D^2(1-\beta/\alpha)}{r^2}\right)^r.
    \]

    This completes the proof.
\end{proof}

Applying the lemma with $f(x)=q(x)$, $D=n-1$, $\beta=b^2$, and $\alpha=a^2$, we obtain
\begin{equation}\label{eq:small_interval_sup_ratio_2}
\frac{\sup_{t\in[b^2,a^2]} |q(t)|}{\sup_{t\in[0,a^2]} |q(t)|} \leq \left(\frac{e^2(n-1)^2(1-b^2/a^2)}{r^2}\right)^r,
\end{equation}
where $r=N_b-1$.

Combining the lower bound \eqref{eq:small_interval_sup_ratio} and the upper bound \eqref{eq:small_interval_sup_ratio_2}, we obtain 
\begin{equation}\label{eq:combined_sup_ratio}
\frac{a}{(a-b)2n(2n-1)H} \leq \left(\frac{e^2(n-1)^2(1-b^2/a^2)}{r^2}\right)^r.
\end{equation}
To extract a bound for $r$, we note that if $r\geq \sqrt2en\sqrt{1-b^2/a^2}$, the right-hand side of \eqref{eq:combined_sup_ratio} is at most $2^{-r}$, which implies
\[
    r \leq \log_2\frac{(a-b)2n(2n-1)H}{a}< \log_2(4n^2H).
\]
Therefore, in all cases, we have
\[
    N_b=1+r \leq 1+\max\left(\sqrt2\,en\sqrt{1-b^2/a^2}, \log_2(4n^2H)\right).
\]
The same bound also holds trivially when $N_b\leq1$.

\subsection{Parameter selection}
\begin{proof}[Proof of Theorem~\ref{thm:moment_lower_bound}]
Let $L:=\log(2n)$, and fix constants $2<\beta<\gamma$.
We choose
\[
    \tau:=L-\gamma\log L,\qquad b:=1-\frac{L^4}{4n^2},\qquad H:=\left\lceil\frac{2n}{L^\beta}\right\rceil,\qquad d:=\left\lfloor\frac{2n}{L}\right\rfloor.
\]
For sufficiently large $n$, these choices satisfy all the required parameter restrictions. We verify that they give $\lambda_{a,n}(B)=o(1)$ and $Q=(1+o(1))n$.

Since $a=\cosh(\tau/(2n))$ and $d\sim 2n/L$, the Legendre bound \eqref{eq:legendre_bound_2} gives
\[
    P_{2(n+d)+1}(a)-1 \leq \exp\left(\frac{2(n+d)+1}{2n}\tau\right) =\exp(\tau+2+o(1))=O\left(\frac{n}{L^\gamma}\right).
\]
Moreover,
\[
    d\,\operatorname{arcosh}(1/b) \sim d\sqrt{2(1-b)} \sim \sqrt{2}\,L.
\]
Applying \eqref{eq:chebyshev_hyperbolic} with $u=\operatorname{arcosh}(1/b)$, we obtain
\[
    \frac{1}{T_d(1/b)^2} \leq 4\exp\left(-2d\,\operatorname{arcosh}(1/b)\right) =O(n^{-1}).
\]
Together with $H^{-1}=O(L^\beta/n)$, \eqref{eq:lambda_upper_bound} therefore implies
\[
    \lambda_{a,n}(B)=O(L^{\beta-\gamma})=o(1).
\]
In particular, $\lambda_{a,n}(B)<1$ for sufficiently large
$n$, so the construction above applies.

We next estimate the number $N_b$ of positive roots of $t\mapsto p(t^2)$ in the high-weight interval $[b,1)$. Since
\[
    a^2-1=O\left(\frac{L^2}{n^2}\right), \qquad 1-b^2=O\left(\frac{L^4}{n^2}\right),
\]
we have
\[
    1-\frac{b^2}{a^2} =O\left(\frac{L^4}{n^2}\right).
\]
The root-count bound proved above now gives
\[
    N_b \leq 1+\max\left(\sqrt{2}\,en\sqrt{1-b^2/a^2},\log_2(4n^2H) \right)=O(L^2).
\]
Consequently,
\[
    (H-1)(N_b+1)=O(nL^{2-\beta})=o(n).
\]

To estimate the remaining root-sum error, observe that
\[
    b+H(1-b)=1+(H-1)(1-b)=1+O\left(\frac{L^{4-\beta}}{n}\right)=1+o(1).
\]
Since $\lambda_{a,n}(B)=o(1)$, the definition \eqref{eq:M_definition} therefore gives
\[
    M=(1+o(1))4n^2,\qquad\log M=2L+o(1).
\]
Combining \eqref{eq:Q_bounds} and \eqref{eq:root_sum_bound},
we obtain
\[
    n-\log M \leq Q \leq n+\log M+a^2+2(H-1)(N_b+1),
\]
and hence
\[
    Q=n+O\left(L+nL^{2-\beta}\right)=(1+o(1))n.
\]

Finally, $\tau\sim L$ and $a=\cosh(\tau/(2n))$ imply
\[
    a(a-1)\sim\frac{\tau^2}{8n^2} \sim\frac{L^2}{8n^2}.
\]
Since $r_0=a$, $r_{\min}\geq-1$, and $g=2n+1$,
the constructed list satisfies
\[
    \frac{r_0(r_0+r_{\min})}{Q} \geq\frac{a(a-1)}{Q} =(1+o(1))\frac{L^2}{8n^3} =(1+o(1))\frac{(\log g)^2}{g^3}.
\]
This proves the desired asymptotic lower bound. For a concrete choice of the logarithmic exponents,
one may take $\beta=3$ and $\gamma=4$.
\end{proof}

\section{Regular Graphs}
\label{sec:regular_graphs}

In this section, we use the local structure of regular graphs to prove an upper bound of order $g^{-3}$ for $\gamma_g^{\mathrm{reg}}$. The proof uses test vectors with alternating signs supported on bipartite balls. We bound their energy edge by edge and average their squared norms over the vertices of a shortest odd cycle to obtain the required Rayleigh quotient estimate.

We write $N(v)$ for the open neighborhood of a vertex $v$, and $\dist(v,w)$ for graph distance. For an integer $j\geq0$, the closed ball $B_j(v)$ is the set of vertices at distance at most $j$ from $v$. The norm $\|q\|$ of a vector is its Euclidean norm.

\begin{theorem}
For every $d$-regular graph $G$ on $n$ vertices of odd girth exactly $g\geq5$, we have
\[
    \frac{d+\lambda_n(G)}{n} \leq \frac{12}{(g-3)(g-2)(g-1)}.
\]
\end{theorem}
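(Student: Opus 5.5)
The plan is to bound $d+\lambda_n(G)$ through the signless Laplacian. For a $d$-regular graph $D+A=dI+A$, so
\[
    d+\lambda_n(G)=\min_{q\neq0}\frac{\sum_{uw\in E(G)}(q_u+q_w)^2}{\|q\|^2}.
\]
It therefore suffices to exhibit a test vector whose energy $\mathcal{E}(q):=\sum_{uw\in E}(q_u+q_w)^2$ satisfies $\mathcal{E}(q)\leq \frac{12n}{(g-3)(g-2)(g-1)}\|q\|^2$.

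\textbf{Step 1: test vectors on bipartite balls.} Put $R:=(g-1)/2$. For a vertex $v$, let $L_i(v)$ be the set of vertices at distance exactly $i$ from $v$. For $i\leq R-1$ there is no edge inside $L_i(v)$: such an edge would close an odd walk of length at most $2i+1\leq g-2$, and hence an odd cycle shorter than $g$. Consequently the ball $B_{R-1}(v)$ induces a bipartite graph whose edges join consecutive levels. Define
\[
    q_v(w):=(-1)^{\dist(v,w)}f(\dist(v,w)),
\]
where $f$ is nonincreasing with $f(i)=0$ for $i\geq R$ (tentatively $f(i)=R-i$, to be tuned). Every edge then contributes $(f(i)-f(i+1))^2$, where $i$ is the smaller of the two distances to $v$. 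Edges between $L_{R}$ and $L_{R}$ or beyond contribute nothing, because $f$ vanishes there. Writing $e_i(v):=e(L_i(v),L_{i+1}(v))$, we have
\[
    \mathcal{E}(q_v)=\sum_{i<R}e_i(v)\,(f(i)-f(i+1))^2.
\]
Regularity and the absence of edges inside levels give $d|L_i(v)|=e_{i-1}(v)+e_i(v)$ for $i<R$. Hence
\[
    \|q_v\|^2=\frac1d\sum_i e_i(v)\bigl(f(i)^2+f(i+1)^2\bigr)
\]
up to boundary terms that only help.

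\textbf{Step 2: averaging over a shortest odd cycle.} A single ball is not enough, because edges near the boundary level $R-1$ have small weight $f(i)^2+f(i+1)^2$. I would therefore let $v$ range over the vertices of a shortest odd cycle $C$, of length exactly $g$, and compare $\sum_{v\in C}\mathcal{E}(q_v)$ with $\sum_{v\in C}\|q_v\|^2$. The minimum of the Rayleigh quotients is then at most the ratio of these sums.

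The key structural input is that $C$ is geodesic: since $C$ is a shortest odd cycle, cycle distance equals graph distance on $C$. For any vertex $w$ (or edge), the function $v\mapsto\dist(v,w)$ on $C$ is $1$-Lipschitz. It cannot be too small at many cycle vertices, since $\dist(v,w)+\dist(w,v')\geq \dist_C(v,v')$. Along $C$ it therefore essentially takes each value in $\{m,\ldots,R\}$ at least about twice, where $m$ is its minimum.

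Summing over $v\in C$ turns both sums into sums over edges $uw$ (respectively vertices), each weighted by a profile in the distance. The energy sum becomes at most $\sum_{uw\in E}\#\{v\in C:\ \text{edge } uw \text{ within radius } R\}$ times the increment squared, which is at most $g\cdot|E|=gdn/2$ overall with unit increments. The norm sum becomes $\sum_w\sum_{v\in C}f(\dist(v,w))^2$.

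Rather than lower-bounding the norm sum vertex by vertex, I would pair each edge's energy with the norm contributions of its own endpoints. For each edge, the Lipschitz profile along $C$ gives
\[
    \sum_{v\in C}\bigl(f(i_v)^2+f(i_v+1)^2\bigr)\gtrsim 2\sum_{i=m}^{R-1}(R-i)^2 .
\]
This should yield an edge-by-edge inequality
\[
    \sum_{v\in C}(q_v(u)+q_v(w))^2\leq \kappa\,\frac1d\sum_{v\in C}\bigl(q_v(u)^2+q_v(w)^2\bigr),
\]
with $\kappa$ tied to the ratio of $\#\{i\}$ to $\sum i^2$. Summing over edges then gives $d+\lambda_n\leq \min_v \mathcal{E}(q_v)/\|q_v\|^2\leq \kappa$, which is not yet a bound with an $n$ in the denominator.

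\textbf{Step 3: bringing in $n$.} To obtain the factor $1/n$, I would instead bound the averaged energy globally by $\sum_{v\in C}\mathcal{E}(q_v)\leq g\cdot\frac{dn}{2}\cdot\max_i(f(i)-f(i+1))^2$. The norm sum I would bound from below using only the vertices of the balls, via $\sum_{v\in C}\|q_v\|^2\geq\sum_{v\in C}\sum_i |L_i(v)|f(i)^2$ together with $|L_i(v)|\geq d$ for $1\leq i<R$. The latter holds because a vertex in $L_i$ has no neighbours in $L_i$, so $L_{i-1}\cup L_{i+1}$ has at least $d$ vertices around it, and a level-sum argument gives $|L_{i-1}|+|L_{i+1}|\geq d$.

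With $f(i)=R-i$ this gives $\sum_{v\in C}\|q_v\|^2\gtrsim g\,d\sum_{i=1}^{R-1}i^2$. Hence
\[
    d+\lambda_n\lesssim \frac{n}{2\sum_{i\leq R-1}i^2},
\]
and $\sum_{i\leq R-1}i^2=\frac{(R-1)R(2R-1)}{6}=\frac{(g-3)(g-1)(g-2)}{24}$. This gives exactly $\frac{12n}{(g-3)(g-2)(g-1)}$ after dividing by $n$, which matches the constant and strongly suggests this is the intended route.

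\textbf{Main obstacle.} The delicate step is the lower bound on the level sizes and its interaction with the energy count. Only $|L_{i-1}|+|L_{i+1}|\geq d$ is immediate, not $|L_i|\geq d$ for every $i$. So I expect to need either the alternating-parity count along $C$, where consecutive cycle vertices shift levels by one so the averaged norm picks up both parities, or the weighted form $d\|q_v\|^2=\sum_i e_i(f(i)^2+f(i+1)^2)$ combined with $e_i\geq d$. I would try the latter first and keep careful track of the boundary level $R-1$ so that the constant $12$ comes out exactly.
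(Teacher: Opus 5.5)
Your test vector is the paper's cone vector ($f(i)=R-i$, with $R=r+1$). Your edgewise bound $\mathcal{E}(q_v)\le |E(G)|=nd/2$ is the paper's first claim, and averaging over a shortest odd cycle $C$ is also the paper's device. The gap is in the one step that carries the content, the lower bound $\frac1g\sum_{v\in C}\|q_v\|^2\ge d\sum_{j=1}^{R-1}j^2$.

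You derive that bound from $|L_i(v)|\ge d$ for $1\le i<R$. Your argument only gives $|L_{i-1}(v)|+|L_{i+1}(v)|\ge d$, and the claim itself is false in general for $d\ge3$. Take a chain of copies of $K_{d,d}$ minus an edge $x_jy_j$, joined by the edges $y_jx_{j+1}$. One computes $|L_1(x_j)|=d$, $|L_2(x_j)|=2d-2$, $|L_3(x_j)|=d$ and $L_4(x_j)=\{x_{j-1},x_{j+1}\}$. Close the chain through a gadget with two vertices of degree $d-1$ at distance $2$ and no short odd cycles. For $m$ links this puts $x_j$ on a shortest odd cycle of length $4m+3$.

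The weaker inequality $|L_{i-1}|+|L_{i+1}|\ge d$ still lets about half of the levels be small, so it costs roughly a factor $2$. Your other fallback is worse. Even granting $e_i\ge d$, the identity $d\|q_v\|^2=\sum_i e_i\bigl(f(i)^2+f(i+1)^2\bigr)$ gives only $\|q_v\|^2\ge R^2+2\sum_{j<R}j^2$. That has no factor $d$, so the final bound would be off by a factor of order $d$.

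The paper gets the factor $d$ from the $d$ neighbours of each cycle vertex, not from level sizes around one centre. It exchanges the order of summation and looks only at vertices $w\in B_1(C)$. Because $C$ is a shortest odd cycle, it is induced, and every vertex has at most two neighbours on $C$; if it has two, they are at cyclic distance $2$. Hence $|B_k(w)\cap C|\ge k\,|N(w)\cap C|$ for all $w\in B_1(C)$ and $1\le k\le r$. Writing $(r+1-j)^2=\sum_{k=j}^{r}(2r+1-2k)$ gives
\[
\sum_{v\in C}\|q_v\|^2\ \ge\ \sum_{w\in B_1(C)}\ \sum_{k=1}^{r}|B_k(w)\cap C|\,(2r+1-2k)\ \ge\ \Bigl(\sum_{k=1}^{r}k(2r+1-2k)\Bigr)\sum_{w\in B_1(C)}|N(w)\cap C|.
\]
Here $\sum_{w\in B_1(C)}|N(w)\cap C|=gd$, and $\sum_{k=1}^{r}k(2r+1-2k)=r(r+1)(2r+1)/6=\sum_{j=1}^{R-1}j^2$. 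This is exactly the bound your Step 3 needs. The Lipschitz-profile heuristics in your Step 2 do not supply it.
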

\begin{proof}
    Let $r=(g-3)/2$. For every vertex $u\in V(G)$, the induced graph on the ball $B_r(u)$ is bipartite (otherwise one can construct an odd cycle of length at most $2r+1<g$ inside the ball). We construct a ``cone vector'' supported on $B_r(u)$, and use its Rayleigh quotient to bound $\lambda_n(G)$. Define
    \[
    q_u(v):=\begin{cases}(-1)^{\dist(u,v)}(r+1-\dist(u,v)),&\dist(u,v)\leq r,\\0,&\dist(u,v)>r,\end{cases}.
    \]

    We first prove an upper bound on the quantity $q_u^\top (dI+A) q_u$, where $A$ is the adjacency matrix of $G$.

    \textbf{Claim.} For every vertex $u\in V(G)$, we have
    \[
        q_u^\top (dI+A) q_u \leq nd/2.
    \]
    \begin{proof}[Proof of Claim]
        For every edge $\{v,w\}\in E(G)$, we have $|q_u(v)+q_u(w)|\leq1$. Indeed, if both endpoints lie outside $B_r(u)$, both values are zero. If exactly one endpoint lies outside $B_r(u)$, the other has distance exactly $r$ from $u$ and hence has $q_u$-value $\pm1$. Finally, if both endpoints lie in $B_r(u)$, their distances from $u$ differ by exactly one, since they are adjacent and $B_r(u)$ is bipartite. Thus their $q_u$-values have opposite signs and magnitudes differing by one, giving $|q_u(v)+q_u(w)|=1$. This in turn implies that
        \[
        q_u^\top (dI+A) q_u = \sum_{\{v,w\}\in E(G)} (q_u(v)+q_u(w))^2 \leq |E(G)| = nd/2.
        \]
    \end{proof}

    This gives control over the numerator in the Rayleigh quotient for $q_u$. To bound the denominator, we average over the vertices of a shortest odd cycle in the graph. Let $C$ be a shortest odd cycle in $G$, and identify $C$ with its vertex set when writing sums, intersections, or cardinalities. Thus $|C|=g$.
    \textbf{Claim.} We have the lower bound
    \[
        \frac1g\sum_{u\in C} \|q_u\|^2 \geq dr(r+1)(2r+1)/6.
    \]
    \begin{proof}[Proof of Claim]
        Since $C$ is a shortest odd cycle, it is induced. Moreover, if $v\notin C$ has two distinct neighbors $x,y\in C$, let $\ell$ be the length of the odd $x$--$y$ arc of $C$. Together with the path $xvy$, this arc forms an odd cycle of length $\ell+2$. Hence $\ell+2\geq g$, forcing $\ell=g-2$. Thus $x$ and $y$ are at cyclic distance two. No three vertices of an odd cycle of length at least five are pairwise at cyclic distance two, so every vertex has at most two neighbors in $C$.

        Let $B_1(C)$ denote the set of vertices at distance at most 1 from the cycle $C$. We separate the vertices $v\in B_1(C)$ into two cases:
        \begin{itemize}
        \item If $|N(v)\cap C|=1$, then $|B_j(v)\cap C|\geq 2j-1\geq j$ for all $1\leq j\leq r$.
        \item If $|N(v)\cap C|=2$, then the two neighbors must be at distance 2 along the cycle, both for $v\in C$ and $v\in B_1(C)\setminus C$. In this case, $|B_j(v)\cap C|\geq2j$ for all $1\leq j\leq r$.
        \end{itemize}
        In both cases we have the lower bound $|B_j(v)\cap C|\geq j|N(v)\cap C|$ for all $1\leq j\leq r$.

        Therefore, we have
        \begin{align*}
        \sum_{u\in C} \|q_u\|^2 &\geq \sum_{v\in B_1(C)} \sum_{u\in C} q_u(v)^2 \\
        &= \sum_{v\in B_1(C)} \sum_{j=0}^{r} \sum_{u \in C, \dist(u,v)=j} (r+1-j)^2 \\
        &= \sum_{v\in B_1(C)} \sum_{j=0}^{r} \sum_{u \in C, \dist(u,v)=j} \sum_{k=j}^{r} (2(r+1-k)-1) \\
        &= \sum_{v\in B_1(C)} \sum_{k=0}^{r} \sum_{u \in C, \dist(u,v)\leq k} (2(r+1-k)-1) \\
        &\geq \sum_{k=1}^{r} \sum_{v\in B_1(C)} |B_k(v)\cap C| (2(r+1-k)-1) \\
        &\geq \left(\sum_{k=1}^{r} k(2(r+1-k)-1)\right) \left( \sum_{v\in B_1(C)} |N(v)\cap C| \right) \\
        &=gd\,r(r+1)(2r+1)/6.
        \end{align*}
    \end{proof}

    With these two claims, we can conclude that there exists a vertex $u\in C$ such that
    \[
        \frac{q_u^\top (dI+A) q_u}{\|q_u\|^2} \leq \frac{nd/2}{dr(r+1)(2r+1)/6} = \frac{3n}{r(r+1)(2r+1)}.
    \]

    And therefore
    \[
        \frac{d+\lambda_n(G)}{n} \leq \frac{3}{r(r+1)(2r+1)}=\frac{12}{(g-3)(g-2)(g-1)}.
    \]
\end{proof}


\section{Conclusion}
\label{sec:conclusion}

We have determined the asymptotic order of the abstract moment problem, proving that
\[
    (1-o(1))\frac{(\log g)^2}{g^3} \leq \gamma'_g \leq \left(\frac{10}{7}+o(1)\right)\frac{(\log g)^2}{g^3}.
\]
For regular graphs, we have shown that $\gamma_g^{\mathrm{reg}}=\Theta(g^{-3})$. Thus the moment relaxation permits values larger by a factor of order $(\log g)^2$ than those attainable by regular graphs.

The asymptotic order of $\gamma_g$ for general graphs remains open. In particular, does the bound $\gamma_g=O(g^{-3})$ hold without the regularity assumption? Our lower bound for $\gamma'_g$ shows that such a result would require information beyond the vanishing odd moments and the quadratic constraint.

We conjecture that the lower bound for the abstract problem has the sharp leading constant, namely
\[
    \gamma'_g=(1+o(1))\frac{(\log g)^2}{g^3}.
\]
For regular graphs, we conjecture that $g^3\gamma_g^{\mathrm{reg}}\to\pi^2$ as $g\to\infty$ through odd integers, so that odd cycles are asymptotically extremal.

\textbf{AI Disclosure.} During the development of this work, the author used OpenAI GPT-6 Astra (accessed September--October 2026) for exploratory numerical and symbolic computations, code generation, testing possible reductions and conjectures, discussing proof strategies for intermediate results, and drafting and revising portions of the manuscript. In particular, the development of the asymptotic upper-bound argument benefited from extended iterative discussion with the model. The research questions and overall direction were determined by the author, who evaluated the computational and mathematical suggestions and independently reconstructed and verified all arguments appearing in the final manuscript. The author wrote and revised the manuscript, incorporating selected model-generated drafts and suggestions, and determined its final content, organization, and presentation. The author takes full responsibility for all statements and any errors in the paper. 

\printbibliography

\end{document}